\newcommand{\real}{{\mathbb{R}}}
\newcommand{\realpositive}{{\mathbb{R}}_{>0}}
\newcommand{\rank}{\operatorname{rank}}
\newcommand{\nullity}{\operatorname{nullity}}
\newcommand{\spec}{\operatorname{spec}} 
\newcommand{\until}[1]{\{1,\dots,#1\}}
\newcommand{\subscr}[2]{#1_{\textup{#2}}}
\newcommand{\supscr}[2]{#1^{\textup{#2}}}
\newtheorem{theorem}{Theorem}[section]
\newtheorem{remark}[theorem]{Remark}
\newtheorem{proposition}[theorem]{Proposition}
\newtheorem{definition}[theorem]{Definition}
\newtheorem{claim}[theorem]{Claim}
\newcommand\oprocendsymbol{\hbox{$\bullet$}}
\newcommand\oprocend{\relax\ifmmode\else\unskip\hfill\fi\oprocendsymbol}
\begin{document}

\title{On Single-Input Controllable Linear Systems Under Periodic DoS
  Jamming Attacks}

\author{Hamed Shisheh Foroush and Sonia Mart{\'\i}nez \thanks{The
    authors are with Department of Mechanical and Aerospace
    Engineering, University of California, San Diego, 9500 Gilman Dr,
    La Jolla CA, 92093, {\tt\small hshisheh,soniamd@ucsd.edu}} }%
\maketitle

\begin{abstract}
  In this paper, we study remotely-controlled single-input
  controllable linear systems, subject to periodic Denial-of-Service
  (DoS) attacks.  We propose a control strategy which can beat any
  partially identified jammer by properly placing
  the closed-loop poles. This is proven theoretically for systems of
  dimension $4$ or less. Nevertheless, simulations show the
  practicality of this strategy for systems up to order
  $5$. 
\end{abstract}

\section{Introduction}

\textit{Cyber-physical systems} consist of physical networked systems
which are controlled and monitored remotely~\cite{AC-SA-SSS:08}.
Novel advances in communications and sensing technologies have
promoted the emergence of these systems, which bears numerous
advantages ranging from ease of implementation to increased utility in
infrastructure facilities~\cite{JH-PN-YX:07}. However, the potential
benefits of such systems may be overturned by several challenges that
include a higher exposure to external attacks. This has motivated a
renewed research effort in the area of \textit{system
  security}~\cite{AC-SA-BS-AG-AP-SS:09,NA:10}, which attempts to
address system preservation issues.

In particular, the security of cyber-physical systems can especially
be threatened by communication-signal jammers that are exogenous to
the system. Common attacks include \textit{Denial-of-Service (DoS)}
and \textit{Deceptive} attacks. 
   In brief, a Deceptive attacker aims to change parceled data,
whereas a DoS one tries to corrupt the transmitted
data~\cite{WX-WT-YZ-TW:05,RP:04}. According to~\cite{EB-JL:04}, the
most likely type of attack is a DoS attack. These attacks can be
further categorized into \emph{periodic} or \textit{Pulse-Width
  Modulated (PWM)} jammers motivated by ease of implementation,
detection avoidance, and energy constraints while; see the
papers~\cite{XL-EWWC-RKCC:09,XL-RKCC:05,BDB-PT:11,AGF-VAS-NP:10}.
Inspired by this facts, this work focuses on the compensation of PWM
DoS jamming attacks whose periodicity has already been detected.

The subject of security of cyber-physical systems is receiving wide
attention in the controls community. In the context of multiagent
systems, the works~\cite{SS-CNH:08ab,FP-AB-FB:11,FP-RC-FB:11}, aim to identify
malicious agents who are part of the network in order to cancel their
contribution. The main goal of~\cite{SB-TB:10} and~\cite{SB-TB:10b} is
to maintain group connectivity despite the presence of a malicious
agent, thus identification is off the ground. The
paper~\cite{MZ-SM:12-auto} proposes a Receding Horizon Control
methodology to deal with a class of deceptive \textit{Replay
  attackers} inducing system delays in formation control missions.
Our problem setup is related to these studies in the sense that the
jammer is assumed to be detected and we aim to develop a method to
overcome its effect.

In the framework of secure discrete LTI systems,~\cite{HF-PT-SD:11}
considers deceptive attacks where the observation channel is jammed.
In~\cite{SA-AC-SSS:09}, however, a DoS attack where the attacker
corrupts the channel while obeying an Identically Independent
Distributed (IID) assumption is considered. Similar research is
conducted in~\cite{LS-BS-MF-KP-SSS:07}.

Game Theory is a natural framework to study system security; some
representative references include~\cite{AG-CL-TB:10,GT-JSB:08},
and~\cite{SR-CE-SS-DD-VS-QW:10}. These papers model the security
problem as a (dynamic) zero-sum, non-cooperative game in order to
predict the behavior of the attacker.  Inspired by a leader-follower
game-theoretic formulation, the paper~\cite{MZ-SM:11b} employs
reinforcement learning to beat a deceptive attacker that can be
modeled as a linear map.  In this framework, the closest reference to
our research is~\cite{AG-CL-TB:10}, which studies how the optimal
control of linear system remotely under a strategic,
energy-constrained DoS jammer. A main restriction
of~\cite{AG-CL-TB:10} is the consideration of scalar dynamics, which
makes the presented analysis more tractable.

Motivated by the goal of maintaining ``intelligent'' and economic
communications, here we address the problem of system resilience in
the context of \textit{triggering control}, i.e., control actions
triggered only when it is necessary.  This is inspired by recent
work~\cite{PT:07,MM-AA-PT:10,XW-MDL:09}.  The distinctive feature in
our study is the fact that communication is not always feasible.  

We consider partially identified DoS attacks imposed by PWM jammers,
along with single-input LTI systems. The current work follows
upon~\cite{HSF-SM:12-cdc}, where we provide sufficient conditions on
the jammer's parameters which, in conjunction with a given triggering
law, can ensure system stability.  Despite the fact that it covers a
broad class of continuous LTI systems, the previous strategy cannot
cope with every given DoS periodic jammer. The current manuscript
solves this issue by introducing a parameter-dependent control
strategy which can tackle any DoS periodic jammer. Thus it ensures
asymptotic stability under this class of attacks, i.e., it renders the
the system \textit{safe and secure}. The strategy is proven to work
for a narrower class of continuous LTI systems of order $4$, or less,
and, based on simulations, we conjecture its validity for a large
class of higher-order systems as well.

 The rest of the paper is organized as
 follows. Section~\ref{sec-problem-formulation} includes the problem
 formulation and notations. In Section~\ref{sec-prel}, some
 preliminaries are provided, where we propose a control design law to
 be employed later. Then we go on to
 Section~\ref{sec-jordan-decomp-triggering}, where we discuss a novel
 attack-resilient triggering law consistent with the jammer signal. In
 Section~\ref{sec-stab-analysis}, we analyze and prove the security of
 the system equipped with this control design and triggering law.  In
 Section~\ref{sec-simulations}, we demonstrate the functionality of
 our theoretical results on two academic examples. At last, in
 Section~\ref{sec-conclusions-future-work} we summarize the results
 and state the future work.


\section{Problem Formulation}
\label{sec-problem-formulation}

In this section, we state, both formally and informally, the main
problem analyzed in the paper. 

We consider a remote operator-plant setup, where the operator uses a
control channel to send wirelessly a control command to an unstable
plant, see Figure~\ref{figure-prob-arch}. We assume that the plant has
no specific intelligence and is only capable of updating the control
based on the data it receives. We also assume that the operator knows
the plant dynamics and is able to measure its states
at particular time-instants.\footnote{This information can be obtained by using
  either local ``passive'' sensors, e.g., camera network or
  positioning systems, e.g., GPS, where no communication or cheap and
  safe communication is required.} 

\begin{figure}
\centering
\psfrag{Plant}[tc][cc]{Plant} \psfrag{Operator}[tc][cc]{Operator}
\psfrag{Jammer}{\textcolor{red}{Jammer}} \psfrag{Control}{Control}
\psfrag{Channel}{Channel}
\psfrag{Observation}{Observation}
\psfrag{Channel}{Channel}
\psfrag{(secure)}{(secure)}
\includegraphics[scale=0.6]{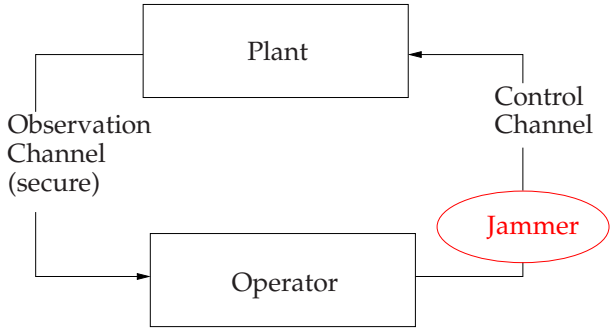}
\caption{Problem Architecture}
\label{figure-prob-arch}
\end{figure}

More precisely, consider the following closed-loop dynamics:
\begin{subequations}
\begin{align}
\label{eqn-sys-high-dim-intro}
\dot{x}(t)&=Ax(t)+Bu(t)\,,\\
\label{eqn-sys-high-dim-control-law-intro}
u(t)&=Kx(t_{k})\,,\quad \forall t\in[t_{k},t_{k+1}[\,,
\end{align}
\end{subequations}
where $x\in\real^{n}$ is the state vector, $u\in\real$ is the
input, $A$, $B$ and $K$ are matrices of proper dimensions, and
$\left\{t_{k}\right\}_{k\geq 1}$ is a triggering time sequence. Here,
we also assume that: (i) the system~\eqref{eqn-sys-high-dim-intro} is
open-loop unstable, and (ii) the pair $(A,B)$ is controllable.

We consider an \textit{energy-constrained}, \textit{periodic} jammer
whose signal can be represented as follows:
\begin{equation}
\label{eqn-jammer-signal}
\subscr{u}{jmd}(t)=\begin{cases}
  1, & (n-1)T\leq t\leq (n-1)T+\subscr{T}{off},\\
  0, & (n-1)T+\subscr{T}{off}\leq t\leq nT,
\end{cases}
\end{equation}
where $n\in\mathbb{N}$ is the period number, $T\in\realpositive$, and
$\mathcal{T}=[0, T]$ is the action-period of the jammer. Also,
$\subscr{T}{off}\in\realpositive$, $\subscr{T}{off} <T$, and
$\mathcal{\subscr{T}{off}}=[0, \subscr{T}{off}]$ is the time-period
where it is sleeping, so communication is possible.  We further denote
$\subscr{T}{on}\in\realpositive$, and
$\mathcal{\subscr{T}{on}}=[\subscr{T}{on}, T]$ to be the time-period
where the jammer is active, thus no data can be sent. Accordingly, it
holds that $\subscr{T}{off} + \subscr{T}{on} = T$. We also note that
the parameter $\subscr{T}{off}$ need not be time-invariant
which recalls Pulse-Width Modulated (PWM) jamming.  Finally, we denote
by $\supscr{\subscr{T}{off}}{cr}$ a uniform lower-bound for
$\subscr{T}{off}$, i.e., $\supscr{\subscr{T}{off}}{cr} \leq
\subscr{T}{off}$ which we assume holds for all the periods and we have
identified as well.

In this paper, we assume that the type of jammer and the period of the
jamming signal has been identified. Future work will be devoted to
enlarge the triggering time sequence for identification
purposes.

Putting these pieces together, we study the following problem:
\begin{quote}
  \emph{[Problem formulation]}: Consider any energy-constrained,
  periodic jammer described by~\eqref{eqn-jammer-signal} with
  parameters $T$ and $\subscr{T}{off}$. Knowing $T$ and
  $\supscr{\subscr{T}{off}}{cr}$, a uniform lower bound on the
  jammer's sleeping periods, find a control strategy of the
  form~\eqref{eqn-sys-high-dim-control-law-intro} that is resilient to
  the action of this jammer.
\end{quote}

\section{Preliminaries}
\label{sec-prel}

In this section, we recall some useful properties of the systems that
we study. These will be employed in the subsequent analysis.

Since $(A,B)$ is a controllable pair, the
system~\eqref{eqn-sys-high-dim-intro} can be put into a controllable
canonical form by a proper similarity transformation~\cite{DSB:05}. Based
on this fact, we narrow our study down to the systems of this form:
\begin{align}
\label{eqn-sys-matrix-prel}
\dot{x} &= \left[ \begin{array}{ccccc} 0 & 1 & 0 & \cdots & 0 \\
 0 & 0 & 1 & \cdots & 0 \\ 
\vdots & \vdots & \vdots & \ddots & \vdots \\ 
0 & 0 & 0 & \cdots & 1 \\ 
-a_{n} & -a_{n-1} & -a_{n-2} & \cdots & -a_{1} \end{array} \right] x + 
\left[ \begin{array}{c} 0 \\ 0 \\ \vdots \\ 0 \\ 1 \end{array} \right] u\,, \nonumber \\
u &= \left[-k_{n}+a_{n}, -k_{n-1}+a_{n-1}, \cdots, -k_{1}+a_{1}\right]x\,.
\end{align}
Recalling the pole-placement assignment techniques, we obtain the
folowing result:
\begin{proposition}
\label{prop-lambda-eigenvalue-prel}
Consider $\lambda\in \realpositive$ and
system~\eqref{eqn-sys-matrix-prel}.  By choosing: 
\begin{equation*}
  k_{i} = \left( \begin{array}{c} n \\ i \end{array} \right) \lambda^{i}, \quad i \in \until{n}\,,
\end{equation*}
all the closed-loop system poles are placed at  $-\lambda $.
\end{proposition}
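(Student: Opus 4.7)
The plan is to compute the closed-loop system matrix, exploit the controllable canonical form to read off its characteristic polynomial directly, and then recognize the binomial expansion.

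First I would form the closed-loop matrix $A + BK$. Because $B$ has only its last entry nonzero, the product $BK$ is zero in every row except the last, where it adds the feedback vector $[-k_n + a_n,\ -k_{n-1} + a_{n-1},\ \ldots,\ -k_1 + a_1]$ to the existing last row $[-a_n,\ -a_{n-1},\ \ldots,\ -a_1]$. The $a_i$ terms cancel, leaving the new last row $[-k_n,\ -k_{n-1},\ \ldots,\ -k_1]$, so $A + BK$ is again a companion matrix. By the standard formula for companion matrices, its characteristic polynomial is
\begin{equation*}
p(s) \;=\; s^n + k_1 s^{n-1} + k_2 s^{n-2} + \cdots + k_{n-1} s + k_n.
\end{equation*}

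Next I would substitute the proposed gains $k_i = \binom{n}{i}\lambda^{i}$ to obtain
\begin{equation*}
p(s) \;=\; s^n + \binom{n}{1}\lambda\, s^{n-1} + \binom{n}{2}\lambda^{2}\, s^{n-2} + \cdots + \binom{n}{n}\lambda^{n},
\end{equation*}
and invoke the binomial theorem to recognize this as $(s + \lambda)^n$. Hence $p(s) = 0$ if and only if $s = -\lambda$, so all $n$ closed-loop eigenvalues coincide with $-\lambda$, as claimed.

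There is no real obstacle here: the result is a direct calculation whose only nontrivial ingredient is noticing that the chosen binomial-coefficient gains are precisely the coefficients of $(s+\lambda)^n$. The controllable canonical form does all the heavy lifting by making the characteristic polynomial readable from a single row of the matrix.
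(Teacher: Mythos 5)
Your proposal is correct and follows essentially the same route as the paper's proof: form the closed-loop companion matrix, read off its characteristic polynomial from the last row, and recognize it as the binomial expansion of $(s+\lambda)^n$. Your explicit remark that the $a_i$ terms cancel in the last row is a small clarification the paper leaves implicit, but the argument is otherwise identical.
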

\begin{proof}
  By plugging the proposed gains, $K_\lambda = [k_1,\dots,k_n]$, in the
  dynamics~\eqref{eqn-sys-matrix-prel}, the state matrix of the
  closed-loop system becomes:
\begin{align*}
  A&+BK_{\lambda} =\\
  & \left[ \begin{array}{cccc} 0 & 1 & \cdots & 0 \\
      0 & 0 & \cdots & 0 \\
      \vdots & \vdots & \ddots & \vdots \\
      0 & 0 & \cdots & 1 \\
      -\left( \begin{array}{c} n \\ n \end{array} \right) \lambda^{n}
      & -\left( \begin{array}{c} n \\ n-1 \end{array} \right)
      \lambda^{n-1} & \cdots & -n\lambda \end{array} \right]\,.
\end{align*}
Note that the characteristic polynomial of this matrix, thanks to its
specific structure, is given by~\cite{DSB:05}:
\begin{equation*} 
  p(s)=s^n + n \lambda s^{n-1}+ \cdots +\left( 
    \begin{array}{c} 
      n \\ n-1 
    \end{array} \right) \lambda^{n-1} s + \left( 
    \begin{array}{c} 
      n \\ n 
    \end{array} \right) 
  \lambda^{n}\,.
\end{equation*} 
Observe that the latter polynomial is indeed $p(s)=(s +
\lambda)^n$. Recall also that the eigenvalues of the matrix
$A+BK_{\lambda}$, i.e., the poles of the closed-loop system, are the
roots of $p(s)$. Thus, we conclude that with this choice of gains, all
the closed-loop poles are placed at $-\lambda$.
\end{proof}

The multiplicities of the eigenvalue $-\lambda$, described in the
previous result, are further characterized next:

\begin{proposition}
\label{prop-lambda-eigenvalue-prop-prel}
Consider the system~\eqref{eqn-sys-matrix-prel} along with the gains
given in Proposition~\ref{prop-lambda-eigenvalue-prel}. The eigenvalue
$-\lambda$ of the matrix $A+BK_{\lambda}$ has algebraic multiplicity
$n$ and geometric multiplicity $1$.  

\end{proposition}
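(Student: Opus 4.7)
The plan is to exploit the companion-form structure of $A+BK_\lambda$ and reduce the geometric-multiplicity claim to a simple rank computation. The algebraic-multiplicity part is already in hand: Proposition~\ref{prop-lambda-eigenvalue-prel} showed that the characteristic polynomial of $A+BK_\lambda$ is $(s+\lambda)^n$, so $-\lambda$ has algebraic multiplicity exactly $n$. What remains is to show that the geometric multiplicity, i.e. $\nullity(A+BK_\lambda+\lambda I)$, equals $1$, or equivalently that $\rank(A+BK_\lambda+\lambda I) = n-1$.

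First I would write out $M := A+BK_\lambda + \lambda I$ explicitly. Because $A+BK_\lambda$ is companion-like (identities on the superdiagonal of the first $n-1$ rows, the $k_i$-coefficients on the last row), shifting by $\lambda I$ simply places $\lambda$ on the diagonal while leaving the superdiagonal $1$'s untouched, and modifies the last row only in its final entry. Hence the top $(n-1)\times n$ block of $M$ looks like
\begin{equation*}
\begin{pmatrix}
\lambda & 1 & 0 & \cdots & 0 & 0 \\
0 & \lambda & 1 & \cdots & 0 & 0 \\
\vdots & & \ddots & \ddots & & \vdots \\
0 & 0 & 0 & \cdots & \lambda & 1
\end{pmatrix}.
\end{equation*}

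Next, since $\lambda \in \realpositive$, each of these $n-1$ rows has a nonzero pivot $\lambda$ in a distinct column (columns $1,\dots,n-1$), so the rows are linearly independent and $\rank(M) \geq n-1$. On the other hand, Proposition~\ref{prop-lambda-eigenvalue-prel} gives $\det(M)=0$ (because $-\lambda$ is an eigenvalue), so $\rank(M) \leq n-1$. Combining the two bounds yields $\rank(M)=n-1$, hence $\nullity(M)=1$, which is exactly the geometric multiplicity of $-\lambda$.

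I do not foresee a real obstacle here: the argument is essentially the standard fact that companion matrices are non-derogatory, and the companion-form structure provided by~\eqref{eqn-sys-matrix-prel} makes the rank inspection immediate. The only point requiring a brief justification is that the diagonal entries $\lambda$ in the top block are nonzero, which is guaranteed by the hypothesis $\lambda \in \realpositive$ inherited from Proposition~\ref{prop-lambda-eigenvalue-prel}. If desired, one could alternatively exhibit the unique (up to scale) eigenvector explicitly as $v = (1,-\lambda,\lambda^2,\dots,(-\lambda)^{n-1})^\top$ by using the superdiagonal identities to propagate the first component, but the rank argument above is the cleanest route.
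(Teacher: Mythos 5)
Your proof is correct and follows essentially the same route as the paper: algebraic multiplicity $n$ from the characteristic polynomial $(s+\lambda)^n$, and geometric multiplicity $1$ via rank–nullity, pinning $\rank(A+BK_\lambda+\lambda I)=n-1$ by combining the structural lower bound (the paper counts $n-1$ independent columns, you count $n-1$ independent rows of the top block) with $\det(A+BK_\lambda+\lambda I)=0$. The only difference is cosmetic, and your explicit eigenvector $v=(1,-\lambda,\dots,(-\lambda)^{n-1})^{\top}$ matches the one the paper later computes in its examples.
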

\begin{proof}
  Since the characteristic polynomial of $A+BK_{\lambda}$ is $p(s)=(s
  + \lambda)^n$, the algebraic multiplicity of $-\lambda$ is equal to
  $n$. The geometric multiplicity of $-\lambda$ is equal to the number
  of linearly independent eigenvectors of $A+BK_{\lambda}$ associated
  to $-\lambda$; in other words, the nullity of the matrix
  $A+BK_{\lambda}+\lambda I$. Further, it holds that~~\cite{DSB:05}:
\begin{equation}
\label{eqn-dummy-null-rank}
\nullity(A+BK_{\lambda}+\lambda I) = n - \rank(A+BK_{\lambda}+\lambda I)\,.
\end{equation}
By construction, $\rank(A+BK_{\lambda}+\lambda I) \geq n-1$, as it has
$n-1$ linearly independent columns. Also, since $-\lambda$ is an
eigenvalue of $A+BK_{\lambda}$, then it holds that $\det
(A+BK_{\lambda}+\lambda I) = 0$, thus $\rank(A+BK_{\lambda}+\lambda I)
< n$, From here, $\rank(A+BK_{\lambda}+\lambda I)=n-1$. According
to~\eqref{eqn-dummy-null-rank}, we have that
$\nullity(A+BK_{\lambda}+\lambda I) = 1$, hence the geometric
multiplicity of $-\lambda$ is $1$.
\end{proof}

\begin{remark}\label{remark-lambda-eigenvalue-prel-poly-depend}
  Note that the matrix $A+BK_{\lambda}$ has only one linearly
  independent eigenvector, therefore, it is \textit{not}
  diagonalizable. This property holds for all values of $\lambda \in
  \realpositive$.

  Furthermore, since the matrix $A+BK_{\lambda}+\lambda I$ depends on
  $\lambda$ in a polynomial way, the components of this eigenvector
  $v$ are rational functions of $\lambda$. In fact, $v$ can be found
  as the solution to the following equation:
\begin{equation*}
  (A+BK_{\lambda}+\lambda I)v = 0\,.
\end{equation*}
\end{remark}

\section{Jordan Decomposition and Triggering Strategy}
\label{sec-jordan-decomp-triggering}

Our control strategy will consist of choosing an appropriate
$K_\lambda$ as in Proposition~\ref{prop-lambda-eigenvalue-prel} and an
associated triggering strategy $\{t_k\}_{k \ge 1}$. In this section,
we study the Jordan decomposition of the closed-loop system under
$K_\lambda$, and how to choose the corresponding $\{t_k\}_{k \ge 1}$.

From Proposition~\ref{prop-lambda-eigenvalue-prel}, the eigenvalues of
the matrix $A+BK_{\lambda}$ are at $-\lambda$. Thus, the Jordan
decomposition of this matrix can be expressed as:
\begin{equation}
  \label{eqn-jordan-decomp-closed-loop-matrix}
  A+BK_{\lambda} = T_{\lambda} J_{\lambda} T^{-1}_{\lambda}\,,
\end{equation}
where $J_{\lambda}=-\lambda I+N$ and $T_{\lambda}$ is a matrix built
upon the linearly independent and  generalized eigenvectors. We
would like to remark the following:
\begin{itemize}

\item The matrix $N$ has a \textit{unique} structure for all values of
  $\lambda$; this is because the geometric multiplicity of this
  eigenvalue remains unchanged. Moreover, by construction of Jordan
  decomposition, it does not depend on this parameter,

\item As discussed in the
  Remark~\ref{remark-lambda-eigenvalue-prel-poly-depend}, the only
  linearly independent eigenvector of $A+BK_{\lambda}$ depends in a
  rational way on $\lambda$. Then, by construction of the generalized
  eigenvectors~\cite{VNF:58}, they also rationally depend on
  $\lambda$. Hence, the matrices $T_{\lambda}$ and $T_{\lambda}^{-1}$,
  they also depend on $\lambda$ in a rational way.

\end{itemize}

Before presenting our triggering strategy, we introduce a family of
coordinate transformations used in this paper. They are based on the
Jordan decomposition of previous paragraphs. Let us consider the
system~\eqref{eqn-sys-matrix-prel}, with the control
$u(t)=K_{\lambda}x(t_{k})$. Then, the closed-loop dynamics is:
\begin{equation*}
  \dot{x} = (A+BK_{\lambda})x + BK_{\lambda}e\,,
\end{equation*}
where $e(t) = x(t_{k}) - x(t)$. Recalling~\eqref{eqn-jordan-decomp-closed-loop-matrix}, the latter
dynamics under the static transformations
$e(t)=T_{\lambda}e_{\lambda}(t)$, and $x(t) =
T_{\lambda}x_{\lambda}(t)$, yields:
\begin{equation}
  \label{eqn-jordan-triggering-trans-sys}
  \dot{x}_{\lambda} =
  J_{\lambda}x_{\lambda}+T_{\lambda}^{-1}BK_{\lambda}T_{\lambda}e_{\lambda}\,.
\end{equation}
The following result states our first attempt in developing the
triggering strategy.
\begin{proposition}
  \label{prop-jordan-trig-seq-event-trig-cond}
  Take $\lambda> \|N\|+1/2$ and $K_{\lambda}$ as in
  Proposition~\ref{prop-lambda-eigenvalue-prel}. Then
  $V(x_{\lambda})=x_{\lambda}^{T}x_{\lambda}$ is an ISS-Lyapunov
  function for the system~\eqref{eqn-jordan-triggering-trans-sys} and
  the event-triggering condition:
  \begin{equation}
    \label{eqn-prop-event-trig-cond}
    |e_{\lambda}(t)|^{2} \leq \frac{\sigma (2\lambda -1
      -2\|N\|)}{\|T_{\lambda}^{-1}BK_{\lambda}T_{\lambda}\|^{2}}
    |x_{\lambda}(t)|^{2}\,,
  \end{equation}
  guarantees the asymptotic stability of the system, for
  $\sigma\in(0,1)$.
\end{proposition}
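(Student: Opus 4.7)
The plan is to take $V(x_\lambda) = x_\lambda^{T} x_\lambda$, compute its derivative along the trajectories of~\eqref{eqn-jordan-triggering-trans-sys}, and bound the two resulting terms separately so that the right-hand side has the ISS form $-\alpha|x_\lambda|^2 + \gamma|e_\lambda|^2$. From here, the triggering condition~\eqref{eqn-prop-event-trig-cond} is designed exactly to dominate the $|e_\lambda|^2$ term by a fraction $\sigma$ of the $|x_\lambda|^2$ term, which yields strict decrease of $V$ and hence asymptotic stability.

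First I would differentiate:
\begin{equation*}
\dot V = 2 x_\lambda^{T}\dot x_\lambda = 2 x_\lambda^{T} J_\lambda x_\lambda + 2 x_\lambda^{T} T_\lambda^{-1} B K_\lambda T_\lambda\, e_\lambda.
\end{equation*}
Using $J_\lambda = -\lambda I + N$, the first term splits as $-2\lambda |x_\lambda|^2 + 2 x_\lambda^{T} N x_\lambda$, and Cauchy--Schwarz with the operator norm gives $2 x_\lambda^{T} N x_\lambda \le 2\|N\|\,|x_\lambda|^2$. For the cross term I would use Young's inequality $2 a^{T} b \le |a|^2 + |b|^2$ with $a = x_\lambda$ and $b = T_\lambda^{-1} B K_\lambda T_\lambda e_\lambda$, obtaining
\begin{equation*}
\dot V \le -(2\lambda - 1 - 2\|N\|)\,|x_\lambda|^2 + \|T_\lambda^{-1} B K_\lambda T_\lambda\|^2\,|e_\lambda|^2.
\end{equation*}
The hypothesis $\lambda > \|N\| + 1/2$ makes the coefficient of $|x_\lambda|^2$ strictly positive, which is precisely the ISS-Lyapunov dissipation inequality for the $(x_\lambda, e_\lambda)$ dynamics.

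Next I would substitute the triggering bound~\eqref{eqn-prop-event-trig-cond} into the above inequality. This produces
\begin{equation*}
\dot V \le -(1-\sigma)(2\lambda - 1 - 2\|N\|)\,|x_\lambda|^2,
\end{equation*}
which is strictly negative for $x_\lambda \ne 0$ since $\sigma \in (0,1)$. Because $V$ is radially unbounded and positive definite, standard Lyapunov arguments then give asymptotic stability of the closed-loop system in the $x_\lambda$ coordinates, and asymptotic stability in the original coordinates follows from the fact that the transformation $x = T_\lambda x_\lambda$ is a fixed linear isomorphism (for fixed $\lambda$).

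I do not expect any serious obstacle here: the two inequalities (spectral bound for the Jordan part and Young's inequality for the error coupling) are routine, and the triggering condition was clearly reverse-engineered from them. The only subtlety worth flagging in the write-up is that the bound $2 x_\lambda^{T} N x_\lambda \le 2\|N\|\,|x_\lambda|^2$ must use the induced $2$-norm consistently with the norm appearing in the triggering condition, and that $T_\lambda$, being invertible for every admissible $\lambda$, makes the coordinate change legitimate so stability in $x_\lambda$ transfers to $x$.
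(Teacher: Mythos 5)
Your proposal is correct and follows essentially the same route as the paper's proof: the same Young's inequality for the cross term, the same spectral bound $2x_\lambda^{T}Nx_\lambda \le 2\|N\|\,|x_\lambda|^2$ (the paper writes it via $J_\lambda^{T}+J_\lambda = -2\lambda I + N^{T}+N$), and the same substitution of the triggering condition to obtain $\dot V \le -(1-\sigma)(2\lambda-1-2\|N\|)|x_\lambda|^2$. The closing remark about transferring stability back through the fixed isomorphism $T_\lambda$ is a small addition the paper leaves implicit, but it does not change the argument.
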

\begin{proof}
Let $\bar{B}_{\lambda} \triangleq
T_{\lambda}^{-1}BK_{\lambda}T_{\lambda}$, computing the
time-derivative of $V(x_{\lambda})$, and plugging from
dynamics~\eqref{eqn-jordan-triggering-trans-sys}, we obtain:
\begin{align*}
  \dot{V}=\dot{x}_{\lambda}^{T}x_{\lambda} +
  x_{\lambda}^{T}\dot{x}_{\lambda}=&(J_{\lambda}x_{\lambda}+\bar{B}_{\lambda}e_{\lambda})^{T}x_{\lambda}
  +\\ &
  x_{\lambda}^{T}(J_{\lambda}x_{\lambda}+\bar{B}_{\lambda}e_{\lambda})\,.
\end{align*}
After some simplification, we obtain:
\begin{equation*}
  \dot{V}=x_{\lambda}^{T}(J_{\lambda}^{T}+J_{\lambda})x_{\lambda}+
  e_{\lambda}^{T}\bar{B}_{\lambda}^{T}x_{\lambda}+x_{\lambda}^{T}\bar{B}_{\lambda}e_{\lambda}\,.
\end{equation*}
The latter can be further bounded recalling the following inequality:
\begin{equation*}
  e_{\lambda}^{T}\bar{B}_{\lambda}^{T}x_{\lambda}+
  x_{\lambda}^{T}\bar{B}_{\lambda}e_{\lambda} \leq
  x_{\lambda}^{T}x_{\lambda} +
  e_{\lambda}^{T}\bar{B}_{\lambda}^{T}\bar{B}_{\lambda}e_{\lambda}\,,
\end{equation*}
which then yields:
\begin{equation}
  \label{eqn-prop-event-trig-dummy-1}
  \dot{V} \leq
  x_{\lambda}^{T}(J_{\lambda}^{T}+J_{\lambda}+I)x_{\lambda}+
  e_{\lambda}^{T}\bar{B}_{\lambda}^{T}\bar{B}_{\lambda}e_{\lambda}\,.
\end{equation}
Now, from our discussion on Jordan decomposition, it holds that
$J_{\lambda}^{T}+J_{\lambda} = -2\lambda I +N^{T} + N$.  By plugging
this back into~\eqref{eqn-prop-event-trig-dummy-1}, we obtain:
\begin{equation*}
  \dot{V} \leq x_{\lambda}^{T}(N^{T}+N-(2\lambda-1)
  I)x_{\lambda}+e_{\lambda}^{T}\bar{B}_{\lambda}^{T}\bar{B}_{\lambda}e_{\lambda}\,.
\end{equation*}
We further upper-bound the latter equation, noting that
$\|N^{T}\|=\|N\|$:
\begin{equation}
  \label{eqn-prop-event-trig-dummy-2}
  \dot{V} \leq -(2\lambda -1 -2\|N\|)|x_{\lambda}|^{2} +
  \|\bar{B}_{\lambda}\|^{2}|e_{\lambda}|^{2}\,.
\end{equation}
Hence, for $\lambda > \|N\|+1/2$, according to
Equation~\eqref{eqn-prop-event-trig-dummy-2}, and that $V(x_{\lambda})
= |x_{\lambda}|^{2}, \forall x_{\lambda}$, we conclude that
$V(x_{\lambda}) = x_{\lambda}^{T}x_{\lambda}$ is an ISS-Lyapunov
function for the
system~\eqref{eqn-jordan-triggering-trans-sys}. Moreover, let
$\sigma\in(0,1)$ and that the triggering time-sequence be given when
the following condition is violated:
\begin{equation*}
  |e_{\lambda}|^{2} \leq \frac{\sigma
    (2\lambda-1-2\|N\|)}{\|\bar{B}_{\lambda}\|^{2}}
  |x_{\lambda}|^{2}\,,
\end{equation*}
it then holds that:
\begin{equation*}
  \dot{V} \leq -(1-\sigma) (2\lambda-1-2\|N\|)|x_{\lambda}|^{2}\,.
\end{equation*} 
Hence, the event-triggering condition, described
by~\eqref{eqn-prop-event-trig-cond}, guarantees the asymptotic
stability of the system.
\end{proof}

\begin{remark} 
  \label{remark-no-accum-event-trig}
  Let $t_{k}$ and $t_{k+1}$ be two consecutive time-instants given by
  the triggering law~\eqref{eqn-prop-event-trig-cond}. Then, for each
  $\lambda$, the following holds:
  \begin{equation*}
    \exists \tau_{\lambda}>0,\, \text{such that}\, t_{k+1}-t_{k}\geq
    \tau_{\lambda}, \forall k\in\mathbb{N}\,.
  \end{equation*}
  This is based on Theorem~III.1,
    presented
  in~\cite{PT:07}. In other words, the time-sequence generated by the
  triggering law~\eqref{eqn-prop-event-trig-cond} does not
  accumulate. This is an important observation used in our analysis.
\end{remark}
For the parameter $\tau_{\lambda}$, we show the following property:
\begin{theorem}
  \label{thm-jordan-trig-seq-tau-lambda}
  Consider the parameter $\tau_{\lambda}$ introduced in
  Remark~\ref{remark-no-accum-event-trig}. Then, the following holds:
  \begin{equation}
    \label{eqn-prop-tau-lambda-limit}
    \lim_{\lambda \rightarrow \infty} {\tau_{\lambda}} = 0\,.
  \end{equation}
\end{theorem}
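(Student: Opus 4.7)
My approach is to make the inter-event bound $\tau_\lambda$ from Remark~\ref{remark-no-accum-event-trig} explicit in $\lambda$ via Tabuada's argument (Theorem~III.1 in~\cite{PT:07}), and then analyze its asymptotic behavior as $\lambda \to \infty$.

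First, I would introduce the comparison function $\phi(t) := |e_\lambda(t)|/|x_\lambda(t)|$ on $[t_k, t_{k+1})$, noting $\phi(t_k) = 0$. Using $\dot e_\lambda = -\dot x_\lambda$ and the dynamics~(\ref{eqn-jordan-triggering-trans-sys}), a routine manipulation with the triangle inequality and Cauchy--Schwarz yields the comparison differential inequality
\[
\dot\phi \leq (\|J_\lambda\| + \|\bar{B}_\lambda\|)(1 + \phi)^2,
\]
where $\bar{B}_\lambda := T_\lambda^{-1} B K_\lambda T_\lambda$. Integrating and solving for the time at which $\phi$ first hits the triggering threshold $\phi_{\max}(\lambda) := \sqrt{\sigma(2\lambda - 1 - 2\|N\|)}/\|\bar{B}_\lambda\|$ produces the explicit closed form
\[
\tau_\lambda = \frac{1}{\|J_\lambda\| + \|\bar{B}_\lambda\|} \cdot \frac{\phi_{\max}(\lambda)}{1 + \phi_{\max}(\lambda)}.
\]

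Next, I would analyze each factor asymptotically. Clearly $\|J_\lambda\| = \|-\lambda I + N\| = \Theta(\lambda)$. For $\|\bar{B}_\lambda\|$, I would use the similarity identity $\bar{B}_\lambda = J_\lambda - T_\lambda^{-1} A T_\lambda$, obtained by rearranging $T_\lambda^{-1}(A + BK_\lambda) T_\lambda = J_\lambda$, together with the fact that the gains $k_i = \binom{n}{i}\lambda^i$ grow polynomially up to degree $n$, to show that $\|\bar{B}_\lambda\|$ grows polynomially and at least linearly in $\lambda$ (and in fact faster for $n\geq 2$, as a direct low-dimensional check reveals). Consequently $\phi_{\max}(\lambda) \to 0$ and $(\|J_\lambda\| + \|\bar{B}_\lambda\|)^{-1} \to 0$, so their product, namely $\tau_\lambda$, tends to $0$.

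The main obstacle is the precise control of $\|\bar{B}_\lambda\|$: because $T_\lambda$ depends rationally and non-uniformly on $\lambda$ (Remark~\ref{remark-lambda-eigenvalue-prel-poly-depend}), cancellations between $T_\lambda^{-1}$, $K_\lambda$, and $T_\lambda$ could in principle mask the polynomial growth. I would address this by combining the similarity identity above with the reverse triangle inequality $\|\bar{B}_\lambda\| \geq \|J_\lambda\| - \|T_\lambda^{-1} A T_\lambda\|$, and then showing, via the explicit binomial structure of $K_\lambda$ and the structure of the generalized eigenvectors, that $\|T_\lambda^{-1} A T_\lambda\|$ cannot annihilate the leading $\Theta(\lambda)$ term of $J_\lambda$. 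This lower bound on the growth rate of $\|\bar{B}_\lambda\|$ is precisely what is needed to force both factors in the closed-form expression to vanish.
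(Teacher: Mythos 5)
Your proof is correct, but it takes a genuinely different route from the paper's. The paper never writes $\tau_{\lambda}$ in closed form: it sandwiches $0\le\tau_{\lambda}\le t_{\lambda}$, where $t_{\lambda}$ is the \emph{actual} first inter-event time, and proves $t_{\lambda}\to 0$ by contradiction, combining the fact that the threshold ratio $F(\lambda)=\sqrt{\sigma(2\lambda-1-2\|N\|)}/\|J_{\lambda}-T_{\lambda}^{-1}AT_{\lambda}\|$ vanishes with the exponential decay estimate $|x_{\lambda}(t)|\le|x_{\lambda}(0)|\exp(-(1-\sigma)(2\lambda-1-2\|N\|)t/2)$. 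You instead instantiate the Tabuada comparison bound explicitly, $\tau_{\lambda}=\frac{1}{\|J_{\lambda}\|+\|\bar{B}_{\lambda}\|}\cdot\frac{\phi_{\max}}{1+\phi_{\max}}$, which is precisely the quantity Algorithm~\ref{c-lambda-algorithm} computes numerically; this is more constructive, and once the formula is on the page the limit is immediate: since $\phi_{\max}/(1+\phi_{\max})<1$ and $\|J_{\lambda}\|\ge\lambda-\|N\|$, you get $\tau_{\lambda}\le 1/(\lambda-\|N\|)\to 0$ with no further work. Consequently your ``main obstacle''---guarding against cancellations in $\|\bar{B}_{\lambda}\|$---is superfluous for the conclusion; and if you do want that lower bound, the paper's device is cleaner than the reverse triangle inequality applied to the raw factors: from $\bar{B}_{\lambda}=-T_{\lambda}^{-1}(\lambda I+A)T_{\lambda}+N$ one gets $\|\bar{B}_{\lambda}\|\ge\rho(\lambda I+A)-\|N\|$ by similarity invariance of the spectrum together with $\rho(\cdot)\le\|\cdot\|$, and the Ger\v{s}gorin disc theorem gives linear growth of $\rho(\lambda I+A)$. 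One caveat worth recording: Remark~\ref{remark-no-accum-event-trig} only asserts existence of \emph{some} positive lower bound, so the two arguments establish slightly different facts---yours that the specific computable (Tabuada) bound vanishes, the paper's that the actual first inter-event time vanishes and hence every admissible lower bound must. Since Definition~\ref{defn-jordan-trig-trig-law} and Algorithm~\ref{c-lambda-algorithm} operate on the Tabuada bound, your version is the operationally relevant one, but the paper's statement is the stronger of the two.
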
 
\begin{proof}
  Recalling
Remark~\ref{remark-no-accum-event-trig}, we first note that for the
parameter $\tau_{\lambda}$ it holds that $\tau_{\lambda} \leq t_{k+1}
- t_{k}, \forall k\in \mathbb{N}$. In particular, it holds that
$\tau_{\lambda} \leq t_{2} - t_{1}$.  Let us, without loss of
generality, set $t_{1} = 0$, and denote $t_{\lambda} \triangleq
t_{2}$. Then, it holds that $0 \leq \tau_{\lambda} \leq
t_{\lambda}$. In this proof, we shall show $\lim_{\lambda \rightarrow
  \infty} {t_{\lambda}} = 0$, which implies that $\lim_{\lambda
  \rightarrow \infty} {\tau_{\lambda}} = 0$. By construction of the
triggering law~\eqref {eqn-prop-event-trig-cond}, the time-instant
$t_{\lambda}$ is when the following holds:
 \begin{equation}
   \label{eqn-thm-tau-event}
   |e_{\lambda}(t_{\lambda})| = \frac{\sqrt{\sigma (2\lambda -1
       -2\|N\|)}}{\|T_{\lambda}^{-1}BK_{\lambda}T_{\lambda}\|}
   |x_{\lambda}(t_{\lambda})|\,.
 \end{equation}
In the latter inequality and according
to~\eqref{eqn-jordan-decomp-closed-loop-matrix}, it holds that
$BK_{\lambda} = T_{\lambda} J_{\lambda} T_{\lambda}^{-1} - A$, that
is:
\begin{equation*} 
  T_{\lambda}^{-1} BK_{\lambda} T_{\lambda} = J_{\lambda} -
  T_{\lambda}^{-1} A T_{\lambda}\,.
\end{equation*}
Hence, according to this equation,~\eqref{eqn-thm-tau-event} can be
written as follows:
\begin{equation*}
  |e_{\lambda}(t_{\lambda})| = \frac{\sqrt{\sigma (2\lambda -1
      -2\|N\|)}}{\|J_{\lambda} - T_{\lambda}^{-1}AT_{\lambda}\|}
  |x_{\lambda}(t_{\lambda})|\,.
\end{equation*}
Let us denote:
\begin{equation}
  \label{eqn-thm-tau-f-lambda-notion}
  F(\lambda) \triangleq \frac{\sqrt{\sigma (2\lambda -1
      -2\|N\|)}}{\|J_{\lambda} - T_{\lambda}^{-1}AT_{\lambda}\|}\,.
\end{equation}
We continue by presenting the following result:

\begin{claim}\label{claim-in-thm-tau-f-lambda}
  The following holds:
\begin{equation}
  \label{eqn-claim-thm-tau-f-lambda-limit}
  \lim_{\lambda \rightarrow \infty}{F(\lambda)} = 0\,.
\end{equation}
\end{claim}
\emph{Proof of Claim~\ref{claim-in-thm-tau-f-lambda}:} Recalling
that $J_{\lambda} = -\lambda I + N$,
Equation~\eqref{eqn-thm-tau-f-lambda-notion} can be written as:
\begin{equation*}
  F(\lambda) = \frac{\sqrt{\sigma (2\lambda -1 -2\|N\|)}}{\|(-\lambda
    I - T_{\lambda}^{-1}AT_{\lambda}) -(-N)\|}\,,
\end{equation*} 
where by applying the triangular-inequality of the form $|\|-\lambda I
- T_{\lambda}^{-1}AT_{\lambda}\| - \|-N\|| \leq \|(-\lambda I -
T_{\lambda}^{-1}AT_{\lambda}) -(-N)\|$, we obtain:
\begin{equation*}
0 \leq F(\lambda) \leq \frac{\sqrt{\sigma (2\lambda -1
    -2\|N\|)}}{|\|-\lambda I - T_{\lambda}^{-1}AT_{\lambda}\| -
  \|-N\||}\,.
\end{equation*} 
Computing the asymptotic limit of the latter inequality, noting that
$N$ is a constant matrix, and $T_{\lambda}^{-1} T_{\lambda} = I$,
yields:
\begin{equation*}
  0 \leq \lim_{\lambda \rightarrow \infty}{F(\lambda)} \leq
  \lim_{\lambda \rightarrow \infty} {\frac{\sqrt{2\sigma
        \lambda}}{\|T_{\lambda}^{-1}(-\lambda I - A)T_{\lambda}\|}}\,.
\end{equation*} 
On the other hand, for a matrix $\bar{A}\in\real^{n \times n}$ it
holds that $\rho(\bar{A}) \leq \|\bar{A}\|$, where $\rho(\bar{A})$ is
the spectral radius of $\bar{A}$; for further insight, refer
to~\cite{DSB:05}. Hence, the latter equation can be further bounded as
in the following:
\begin{equation}
  \label{eqn-claim-thm-tau-f-lambda-limit-bnd-1}
  0 \leq \lim_{\lambda \rightarrow \infty}{F(\lambda)} \leq
  \lim_{\lambda \rightarrow \infty} {\frac{\sqrt{2\sigma
        \lambda}}{\rho(T_{\lambda}^{-1}(\lambda I +
      A)T_{\lambda})}}\,.
\end{equation}
Furthermore, recalling that $T_{\lambda}^{-1}(\lambda I +
A)T_{\lambda}$ is the similarity transformation of the matrix $\lambda
I + A$, it holds that $\rho(T_{\lambda}^{-1}(\lambda I +
A)T_{\lambda}) = \rho(\lambda I + A)$. Accordingly, we get:
\begin{equation*}
  0 \leq \lim_{\lambda \rightarrow \infty}{F(\lambda)} \leq
  \lim_{\lambda \rightarrow \infty} {\frac{\sqrt{2\sigma
        \lambda}}{\rho(\lambda I + A)}}\,.
\end{equation*} 
Now, by means of the Ger$\check{\text{s}}$gorin disc theorem, we
obtain that:
\begin{align*}
 \rho(\lambda I & + A) \in \cup_{i=1}^{n} D(\lambda +
 a_{ii},\sum_{j\neq i}{|a_{ij}|})
\,.
\end{align*}
This implies that $\rho(\lambda I + A)$ has a linear growth as
$\lambda$ tends to infinity. Therefore, without loss of generality, we
can write the equation~\eqref{eqn-claim-thm-tau-f-lambda-limit-bnd-1}
as follows:
\begin{equation}
  \label{eqn-claim-thm-tau-f-lambda-limit-bnd-2}
  0 \leq \lim_{\lambda \rightarrow \infty}{F(\lambda)} \leq
  \lim_{\lambda \rightarrow \infty} {\frac{\sqrt{2\sigma
        \lambda}}{\lambda + c}}\,,
\end{equation}
where $c$ is some constant. Hence, based
on~\eqref{eqn-claim-thm-tau-f-lambda-limit-bnd-2}, it is easy to check
that~\eqref{eqn-claim-thm-tau-f-lambda-limit} holds, which completes
the proof of this claim.  \oprocend

Having proved $\lim_{\lambda \rightarrow \infty}{F(\lambda)} = 0$, we
would like to show $\lim_{\lambda \rightarrow \infty}{t_\lambda} = 0$,
which then, by the squeeze theorem,
   yields
$\lim_{\lambda \rightarrow \infty}{\tau_\lambda} = 0$. The next claim
goes along this direction.

\begin{claim}\label{claim-in-thm-tau-t-lambda}
  The following holds:
\begin{equation}
  \label{eqn-claim-thm-tau-t-lambda-limit}
  \lim_{\lambda \rightarrow \infty}{t_\lambda} = 0\,.
\end{equation}
\end{claim}
\emph{Proof of Claim~\ref{claim-in-thm-tau-t-lambda}:} Recalling the
triggering law and by construction of $t_{\lambda}$, the following
holds:
\begin{equation}
  \label{eqn-claim-thm-tau-t-lambda-limit-bnd-1}
  |e_{\lambda}(t_{\lambda})| = F(\lambda) x_{\lambda}(t_{\lambda})|\,.
\end{equation}
On the other hand, recall that for $t \in [0 , t_{\lambda}]$:
\begin{equation*}
e_{\lambda}(t) = T_{\lambda}^{-1} e(t) = T_{\lambda}^{-1} (x(t) - x_{0})\,,
\end{equation*}
that yields:
\begin{equation*}
e_{\lambda}(t)= x_{\lambda}(t) - T_{\lambda}^{-1}x_{0}\,.
\end{equation*}
Applying the latter equation
on~\eqref{eqn-claim-thm-tau-t-lambda-limit-bnd-1} gives:
\begin{equation}
\label{eqn-claim-thm-tau-t-lambda-limit-bnd-2}
|x_{\lambda}(t_{\lambda}) - T_{\lambda}^{-1}x_{0}| = F(\lambda) |x_{\lambda}(t_{\lambda})|\,.
\end{equation}
In order to prove the result, we now consider two cases:

  \textbf{Case (i): $\mathbf{|x_{\lambda}(t_{\lambda})| = 0}$.}
In this case, by~\eqref{eqn-claim-thm-tau-t-lambda-limit-bnd-2}, it
holds that $x_{0} = 0$. Also since the system is linear and the chosen
control law of the form $u = Kx_{0}$, the system does not evolve. In
other words there is no need for triggering, $t_{\lambda} = 0$ which
renders $\tau_{\lambda} = 0$.

\textbf{Case (ii): $\mathbf{|x_{\lambda}(t_{\lambda})| \neq 0}$.} In
this case, dividing~\eqref{eqn-claim-thm-tau-t-lambda-limit-bnd-2} by
$|x_{\lambda}(t_{\lambda})|$, bestows:
\begin{equation*}
  F(\lambda) = \frac{|x_{\lambda}(t_{\lambda}) -
    T_{\lambda}^{-1}x_{0}|}{|x_{\lambda}(t_{\lambda})|}\,.
\end{equation*}
Computing the absolute-value of the latter equation, and applying the
triangular inequality, yields the following result:
\begin{align*}
  |F(\lambda)| = F(\lambda) & = \left| \frac{|x_{\lambda}(t_{\lambda})
    - T_{\lambda}^{-1}x_{0}|}{|x_{\lambda}(t_{\lambda})|} \right| \\ &
  \geq \left| \frac{|x_{\lambda}(t_{\lambda})| -
    |T_{\lambda}^{-1}x_{0}|}{|x_{\lambda}(t_{\lambda})|} \right| \geq
  0\,.
\end{align*}
Now, according to the Claim~\ref{claim-in-thm-tau-f-lambda} and by the
squeeze theorem, we obtain:
\begin{equation}
  \label{eqn-claim-thm-tau-t-lambda-limit-bnd-3}
  \lim_{\lambda \rightarrow \infty} {\left| 1 -
    \frac{|T_{\lambda}^{-1}x_{0}|}{|x_{\lambda}(t_{\lambda})|}
    \right|} = 0 \Rightarrow \lim_{\lambda \rightarrow \infty} {
    \frac{|x_{\lambda}(0)|}{|x_{\lambda}(t_{\lambda})|} } = 1\,.
\end{equation}
Now we will show that $\lim_{\lambda \rightarrow \infty}t_{\lambda} =
0$ exploiting a contradiction argument. Hence, let us assume that
$\lim_{\lambda \rightarrow \infty} {t_{\lambda}} \neq 0$. Then, the
negation of the mathematical definition of $\lim_{\lambda \rightarrow
  \infty} {t_{\lambda}} = 0$
   implies:
\begin{equation}
  \label{eqn-claim-thm-tau-t-lambda-limit-bnd-4}
  \exists t^{*}\, \text{such that}\, \forall \lambda>0, \exists
  \bar{\lambda}>\lambda\, \text{such that}\, t_{\bar{\lambda}} >
  t^{*}\,.
\end{equation}
In~\eqref{eqn-claim-thm-tau-t-lambda-limit-bnd-4}, let us take the
sequence $\{\lambda_{k}\}_{k\in\mathbb{N}}$, where $\lambda_{k}
\rightarrow \infty$ as $k \rightarrow \infty$, and set $\bar{\lambda}
\equiv \lambda_{k}$.  Then we have the counterpart sequence
$\{t_{\lambda_{k}}\}_{k\in\mathbb{N}}$, for which we have that
$t_{\lambda_{k}} > t^{*}$. Because of this, we have that:
\begin{equation*}
  |x_{\lambda_{k}}(t_{\lambda_{k}})| \leq |x_{\lambda_{k}}(t^{*})|\,,
\end{equation*}
which follows from $\dot{V}(t) < 0$ for $t<t_{\lambda_{k}}$---by the
choice of our triggering law---and that $V(t) =
|x_{\lambda_{k}}(t)|^{2}$.  To obtain a deeper insight, we refer the
reader to the proof of Theorem~\ref{main-thm-arbit-jammer}. 
Based on this observation, we derive the following inequality:
\begin{equation}
  \label{eqn-claim-thm-tau-t-lambda-limit-bnd-5}
\lim_{k \rightarrow \infty}
    {\frac{|x_{\lambda_{k}}(0)|}{|x_{\lambda_{k}}(t^{*})|}} \leq
    \lim_{k \rightarrow \infty}
        {\frac{|x_{\lambda_{k}}(0)|}{|x_{\lambda_{k}}(t_{\lambda_{k}})|}}\,.
\end{equation}
Since $t^{*} < t_{\lambda_{k}}$, and by
Equation~\eqref{eqn-thm-trans-sys-state-bound}, we obtain:
\begin{equation*}
|x_{\lambda_{k}}(t^{*})| \leq |x_{\lambda_{k}}(0)| \exp{( -(1-\sigma)
  (2\lambda_{k}-1-2\|N\|) t^{*}/2 )}\,,
\end{equation*}
which yields:
\begin{equation}
  \label{eqn-claim-thm-tau-t-lambda-limit-bnd-6}
  \frac{|x_{\lambda_{k}}(0)|}{|x_{\lambda_{k}}(t^{*})|} \geq \exp{(
    (1-\sigma) (2\lambda_{k}-1-2\|N\|) t^{*}/2 )}\,.
\end{equation}
As $\sigma\in(0,1)$, by properly letting $\lambda_{k} \geq 1/2+\|N\|$
go to infinity, we obtain:
\begin{equation*}
  \lim_{k \rightarrow \infty} {
    \frac{|x_{\lambda_{k}}(0)|}{|x_{\lambda_{k}}(t^{*})|} }= \infty,
\end{equation*}
which induces:
\begin{equation*}
  \lim_{k \rightarrow \infty}
      {\frac{|x_{\lambda_{k}}(0)|}{|x_{\lambda_{k}}(t_{\lambda_{k}})|}}
      = \infty\,,
\end{equation*}
from~\eqref{eqn-claim-thm-tau-t-lambda-limit-bnd-5}. From here we
conclude that~\eqref{eqn-claim-thm-tau-t-lambda-limit-bnd-6} is in
contradiction with~\eqref{eqn-claim-thm-tau-t-lambda-limit-bnd-3}.
Therefore, it must be that $\lim_{\lambda \rightarrow \infty}
{t_{\lambda}} =0$.

Henceforth, we observe that for both \textbf{Case~(i)} and
\textbf{Case~(ii)}, the
Equation~\eqref{eqn-claim-thm-tau-t-lambda-limit} holds, thus it
completes the proof of this claim.  \oprocend

Finally, from the set of inequalities $0 \leq \tau_{\lambda} \leq
t_{\lambda}$, which holds for all $\lambda \in \realpositive$, and by
Claim~\ref{claim-in-thm-tau-t-lambda} that shows that $\lim_{\lambda
  \rightarrow \infty} {t_{\lambda}} = 0$, we can conclude that
$\lim_{\lambda \rightarrow \infty} {\tau_{\lambda}} = 0$.
\end{proof}

In this paper, we assume that the jammer is imposing a
\emph{``worst-case jamming scenario''}, i.e.,
$\subscr{T}{off}=\supscr{\subscr{T}{off}}{cr}$. Now, having
established the Proposition~\ref{prop-jordan-trig-seq-event-trig-cond}
and introduced the parameter $\tau_{\lambda}$ in 
Remark~\ref{remark-no-accum-event-trig}, we define the triggering
strategy as follows.
\begin{definition}
\label{defn-jordan-trig-trig-law}
  The triggering strategy used
in this paper, despite presence of the jammer, is as follows:
\begin{align}                                                               
  \label{eqn-jordan-trig-law-jammer}
  t^{*}_{k,n}\in& \left\{ l\tau_{\lambda} \ \big| l\tau_{\lambda} \in
  [(n-1)T, (n-1)T+ \supscr{\subscr{T}{off}}{cr}] \right\}\cup\nonumber
  \\ & \left\{ nT \right\}\,.
\end{align}

  We note that based
on Theorem~\ref{thm-jordan-trig-seq-tau-lambda}, and for a given $T$,
we can find a $\lambda_{c}$ so that the multiples of $\tau_{\lambda}$
lie in the desired interval, i.e., the set introduced
in~\eqref{eqn-jordan-trig-law-jammer} is never empty.  The triggering
law introduced in our recent paper~\cite{HSF-SM:12-cdc} has inspired
this strategy. The main difference between both laws is the choice of
triggering times. While here we adapt the triggering sequence via an
appropriate choice of $\tau_{\lambda}$ that depends on the jammer,
in~\cite{HSF-SM:12-cdc} we study when the time-sequence generated
by~\eqref{eqn-prop-event-trig-cond} is sufficient to beat a given
jammer.  

In~\eqref{eqn-jordan-trig-law-jammer}, $k\in\mathbb{N}$ denotes the
number of triggering times occurring in the $\supscr{n}{th}$ jammer
action-period, and $l\in\mathbb{N}$ stands for the multiples of
$\tau_{\lambda}$ starting from $l=1$ in the first period and adding up
afterwards.
\end{definition}

\section{Stability Analysis of the Triggering Strategy}
\label{sec-stab-analysis}

  In this section, we
shall present the main result of this paper to guarantee the stability
of the class of systems considered under the given type of jamming
attacks.

 The following bound is found in~\cite{CVL:77}:
\begin{equation}
  \label{eqn-bound-exp-matrix-dummy}
  \left\|\exp(M)\right\| \leq \exp(\mu(M))\,, \quad  M\in\real^{n\times n}\,,
\end{equation}
where the $\mu$ operator is defined as follows:
\begin{equation}
\label{eqn-mu-oper-expr-dummy}
  \mu(M)=\max \left\{\mu \, \huge|
  \,\mu\in\spec\left(\frac{M+M^{T}}{2}\right)\right\}\,.
\end{equation}
In the proof of next result and in order to avoid the sign confusion,
we shall use a variation of~\eqref{eqn-mu-oper-expr-dummy}.  Denote:
\begin{equation}
\label{eqn-mu-oper-expr}
\mu_M \triangleq |\mu(M)| + 1\,. 
\end{equation}
Then, the following holds:
\begin{equation}
  \label{eqn-bound-exp-matrix}
  \left\|\exp(M)\right\| \leq \exp(\mu_M)\,.
\end{equation}

\begin{theorem}
\label{main-thm-arbit-jammer}
Consider the system~\eqref{eqn-sys-matrix-prel} of order lesser than
$5$, where $(A,B)$ is a controllable pair. Given a jammer signal
$(\supscr{\subscr{T}{off}}{cr}, T)$, then $\exists
\lambda^{*}>\|N\|+1/2$, such that $\forall \lambda \geq \lambda^{*}$,
the system with control gain $K_{\lambda}$ as chosen in
Proposition~\ref{prop-lambda-eigenvalue-prel} and with triggering
strategy~\eqref{eqn-jordan-trig-law-jammer}, is asymptotically stable.
\end{theorem}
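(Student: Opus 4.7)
The strategy is a period-wise contraction argument: I would show that over any full jammer period $[(n-1)T, nT]$ the state norm shrinks by a factor strictly less than one whenever $\lambda$ is chosen large enough, and then iterate. To this end I split each period into the sleeping sub-interval of length $\supscr{\subscr{T}{off}}{cr}$, in which the triggering law~\eqref{eqn-jordan-trig-law-jammer} is active, and the jammed sub-interval of length $T-\supscr{\subscr{T}{off}}{cr}$, in which $u$ is held at its last received value. By Theorem~\ref{thm-jordan-trig-seq-tau-lambda}, $\tau_{\lambda}\to 0$ as $\lambda\to\infty$, so for sufficiently large $\lambda$ the set of triggering times in~\eqref{eqn-jordan-trig-law-jammer} is non-empty and consecutive triggers are separated by $\tau_{\lambda}$.

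On the sleeping sub-interval, the fact that consecutive triggers are $\tau_{\lambda}$ apart combined with Remark~\ref{remark-no-accum-event-trig} implies that the event-triggering condition~\eqref{eqn-prop-event-trig-cond} cannot be violated between two consecutive triggers; hence Proposition~\ref{prop-jordan-trig-seq-event-trig-cond} yields $\dot{V}\leq -\alpha(\lambda) V$ with $\alpha(\lambda):=(1-\sigma)(2\lambda-1-2\|N\|)$. Integration in Jordan coordinates gives
\begin{equation*}
|x_{\lambda}((n-1)T+\supscr{\subscr{T}{off}}{cr})|\leq \exp\bigl(-\alpha(\lambda)\supscr{\subscr{T}{off}}{cr}/2\bigr)\,|x_{\lambda}((n-1)T)|.
\end{equation*}

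On the jammed sub-interval, $u$ is frozen at $K_{\lambda}x(t_{k^{*}})$ with $t_{k^{*}}$ the last sleeping trigger; writing $\dot{x}=Ax+Bu$ and applying the variation-of-constants formula together with the matrix-exponential bound~\eqref{eqn-bound-exp-matrix} produces an estimate $|x(nT)|\leq G(\lambda)\,|x((n-1)T+\supscr{\subscr{T}{off}}{cr})|$, in which $G(\lambda)$ depends on $\exp(\mu_{A}(T-\supscr{\subscr{T}{off}}{cr}))$, $\|B\|$, and $\|K_{\lambda}\|$, and therefore grows only polynomially in $\lambda$. Passing back to the original coordinates via $x=T_{\lambda}x_{\lambda}$ introduces the condition number $\kappa(\lambda):=\|T_{\lambda}\|\,\|T_{\lambda}^{-1}\|$, giving a per-period contraction factor of the form
\begin{equation*}
\rho(\lambda):=\kappa(\lambda)\,G(\lambda)\,\exp\bigl(-\alpha(\lambda)\supscr{\subscr{T}{off}}{cr}/2\bigr).
\end{equation*}
Since $\alpha(\lambda)$ is linear in $\lambda$, the exponential decays faster than any polynomial while $\kappa(\lambda)$ and $G(\lambda)$ are polynomial in $\lambda$, so $\rho(\lambda)\to 0$ as $\lambda\to\infty$. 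Taking $\lambda^{*}$ to be the smallest $\lambda\geq\|N\|+1/2$ for which $\rho(\lambda)<1$ and iterating across periods then yields asymptotic stability.

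The principal obstacle is obtaining sharp polynomial bounds on $\kappa(\lambda)$ and $G(\lambda)$. By Remark~\ref{remark-lambda-eigenvalue-prel-poly-depend}, the generalized eigenvectors of $A+BK_{\lambda}$ are rational functions of $\lambda$ whose complexity grows with $n$; the entries of $T_{\lambda}$ behave like polynomials in $\lambda$ of degree up to $n-1$, and the analogous bound for $T_{\lambda}^{-1}$ must be extracted by closed-form inversion of the corresponding Jordan basis. For $n\leq 4$ these symbolic computations are tractable and yield explicit estimates confirming $\rho(\lambda)<1$ for all sufficiently large $\lambda$; for $n\geq 5$ the expressions become unmanageable by hand, which is precisely why the theorem is limited to dimension four and why higher-order instances are addressed only numerically in Section~\ref{sec-simulations}.
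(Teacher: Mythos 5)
Your skeleton matches the paper's proof almost step for step: restrict attention to one jammer period, split it at the end of the sleeping window, use the ISS-Lyapunov decay from Proposition~\ref{prop-jordan-trig-seq-event-trig-cond} on the first piece (the paper's floor-function bound $t^{*}_{m}\geq \supscr{\subscr{T}{off}}{cr}/2$ plays the role of your decay window of length $\supscr{\subscr{T}{off}}{cr}$), apply variation of constants with the frozen control and the bound~\eqref{eqn-bound-exp-matrix} on the second piece, and then let the exponential in $\lambda$ dominate so that the per-period gain tends to $0$ and can be made less than one. Your $\kappa(\lambda)=\|T_{\lambda}\|\,\|T_{\lambda}^{-1}\|$ is exactly the paper's factor $\|T_{\lambda}^{-1}\|/\sqrt{\lambda_{\min}((T_{\lambda}^{-1})^{T}T_{\lambda}^{-1})}$, and your iteration over periods is the paper's decreasing-sequence argument for $\{|x(nT)|\}$.

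The one place you diverge --- and where your argument is thinnest --- is precisely the step that carries the order-$\leq 4$ hypothesis. You assert that $\kappa(\lambda)$ and $G(\lambda)$ grow only polynomially and defer the justification to ``tractable symbolic computations'' for $n\leq 4$, attributing the dimension restriction to hand-computability. That is not where the paper locates the restriction, and as written it is an unproved claim rather than an argument: nothing in your proposal establishes sub-exponential growth of $\lambda_{\min}((T_{\lambda}^{-1})^{T}T_{\lambda}^{-1})^{-1}$, equivalently of $\|T_{\lambda}\|$, as $\lambda\to\infty$. The paper's route is structural: the generalized eigenvectors depend rationally on $\lambda$ (Remark~\ref{remark-lambda-eigenvalue-prel-poly-depend}), hence so do the entries of $(T_{\lambda}^{-1})^{T}T_{\lambda}^{-1}$ and the coefficients of its characteristic polynomial; because that polynomial has degree $n\leq 4$, its roots --- in particular $\lambda_{\min}$ --- are argued (via solvability by radicals) to depend on $\lambda$ in a semi-algebraic way, and semi-algebraic growth is dominated by the exponential decay $\exp(-(1-\sigma)(2\lambda-1-2\|N\|)\supscr{\subscr{T}{off}}{cr}/4)$. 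That Galois-theoretic step is the entire reason the theorem is stated for order less than $5$ and why the fifth-order case is only checked numerically in Section~\ref{sec-simulations}. To close your version you must either actually carry out the explicit inversion and norm estimates you allude to, or supply a structural argument of the paper's kind showing that the coordinate-change factor cannot grow fast enough to cancel the exponential.
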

\begin{proof}
  We shall focus on the first jammer action-period, i.e., $0 \leq t
  \leq T$. For the sake of brevity, we drop $n=1$ in the $t^{*}_{k,n}$
  annotation. Without loss of generality, let $t^{*}_{k}=
  k\tau_{\lambda}$, for $k\in\until m$, be the time-sequence generated
  by~\eqref{eqn-jordan-trig-law-jammer}, where $m$ is such
  that:

  \begin{equation*}
    t^{*}_{m}=m\tau_{\lambda} \leq \supscr{\subscr{T}{off}}{cr} <
    t^{*}_{m+1} = (m+1)\tau_{\lambda}\,.
  \end{equation*}
  We note that we can always assume this, since according to the
  Theorem~\ref{thm-jordan-trig-seq-tau-lambda}, we can make
  $\tau_{\lambda}$ arbitrarily small by choosing $\lambda$ large
  enough. As $\tau_{\lambda} > 0$, the latter equation yields:
  \begin{equation*}
    m \leq \frac{ \supscr{\subscr{T}{off}}{cr} }{\tau_{\lambda}} < m+1\,.
  \end{equation*}
  Thus, $\lfloor \frac{ \supscr{\subscr{T}{off}}{cr} }{\tau_{\lambda}}
  \rfloor = m$, where $\lfloor . \rfloor$ is the floor operator, and
  \begin{equation}
    \label{eqn-floor-tm-tau-lambda-dummy}
    t^{*}_{m} = \left\lfloor \frac{ \supscr{\subscr{T}{off}}{cr}
      }{\tau_{\lambda}} 
    \right\rfloor \tau_{\lambda}\,.
  \end{equation}
 
  It is easy to see that for all $a>0$, if $\lfloor a \rfloor \geq 1$,
  then $\lfloor a \rfloor \geq a/2$. Based on this observation, and as
  $\frac{ \supscr{\subscr{T}{off}}{cr} }{\tau_{\lambda}} \geq 1$, then
  it holds that $\lfloor \frac{ \supscr{\subscr{T}{off}}{cr}
  }{\tau_{\lambda}} \rfloor \geq \frac{ \supscr{\subscr{T}{off}}{cr}
  }{2\tau_{\lambda}}$, which by~\eqref{eqn-floor-tm-tau-lambda-dummy},
  gives:
  \begin{equation}
    \label{eqn-floor-tm-tau-lambda}
    t^{*}_{m} = \left\lfloor \frac{ \supscr{\subscr{T}{off}}{cr}
      }{\tau_{\lambda}} \right\rfloor 
    \tau_{\lambda} \geq \frac{\supscr{\subscr{T}{off}}{cr}}{2}\,.
  \end{equation}
  The rest of the proof goes over the following steps:
  \begin{enumerate}
  \item We break the time-interval $[0, T]$ into two subintervals $[0,
    t^{*}_{m+1}]$, and $[t^{*}_{m+1}, T]$; namely, when the jammer is
    \textit{sleeping} and \textit{active}, respectively,
    
  \item Then, in order to find an estimate for $|x(t^{*}_{m+1})|$, and
    $|x(T)|$, we first transform the original system into new
    coordinates by the matrix $T_{\lambda}$; we perform some
    computations, and transform it back into its original coordinates,
    by $T_{\lambda}^{-1}$ --- this is done for each subinterval $[0,
      t^{*}_{m+1}]$, and $[t^{*}_{m+1}, T]$.  This way, the analysis
    becomes more tractable, 
  
  \item Finally, the theorem conclusion will follow by studying a
    coefficient $C(\lambda)$, appearing in $|x(T)| < C(\lambda)
    |x(0)|$. Due to $\lim_{\lambda \rightarrow \infty} {C(\lambda)} =
    0$, we will be able to guarantee $\{|x(nT)|\}$ is decreasing and
    use a Lyapunov argument to prove stability.
  \end {enumerate}
     
  Let us consider the transformed
  system~\eqref{eqn-jordan-triggering-trans-sys}. We observe that for
  $t \in [0, t^{*}_{m+1}]$, and according to
  Remark~\ref{remark-no-accum-event-trig}, the
  event~\eqref{eqn-prop-event-trig-cond} introduced in
  Proposition~\ref{prop-jordan-trig-seq-event-trig-cond} holds, and
  that also that $V = x^{T}_{\lambda}x_{\lambda} = |x_{\lambda}|^{2}$ is an
  ISS-Lyapunov function. Hence, resorting to the proof of this
  proposition, the following inequality holds:
  \begin{align*}
    \dot{V}(x_{\lambda}) \leq &-(1-\sigma)(2\lambda -1 -2\|N\|)
    |x_{\lambda}|^{2} =\\ & -(1-\sigma)(2\lambda -1 -2\|N\|)
    V(x_{\lambda}), \; \forall t\in[0, t^{*}_{m+1}]\,.
  \end{align*}

The latter equation, by the comparison principle, yields:
  \begin{equation*}
    V(x_{\lambda}) \leq V(x_{\lambda}(0)) \exp{(-(1-\sigma)(2\lambda
      -1 -2\|N\|)t)}\,.
  \end{equation*}
  which then, by recalling $V = x^{T}_{\lambda}x_{\lambda} =
  |x_{\lambda}|^{2}$, yields:
  \begin{equation}
    \label{eqn-thm-trans-sys-state-bound}
    |x_{\lambda}(t)| \leq |x_{\lambda}(0)| \exp{(-(1-\sigma)(2\lambda
      -1 -2\|N\|)t/2)}\,.
  \end{equation}
  Now, we have to transform the latter equation into original
  coordinates. First, by using $x(t) = T_{\lambda}x_{\lambda}(t)$:
  \begin{equation}
    \label{eqn-thm-trans-sys-state-bound-transformer}
    \lambda_{\min}((T_{\lambda}^{-1})^{T} (T_{\lambda}^{-1})) |x|^{2}
    \leq |x_{\lambda}|^{2} \leq \|T_{\lambda}^{-1}\|^{2} |x|^{2}\,.
  \end{equation}
  The latter equation is obtained noting that (i) $|x_{\lambda}|^{2} =
  x^{T} (T_{\lambda}^{-1})^{T} (T_{\lambda}^{-1}) x$, and (ii) the
  matrix $(T_{\lambda}^{-1})^{T} (T_{\lambda}^{-1})$ is a
  positive-definite symmetric matrix.

  According to~\eqref{eqn-thm-trans-sys-state-bound-transformer},
  Equation~\eqref{eqn-thm-trans-sys-state-bound} implies:
  \begin{equation}
    \label{eqn-thm-orig-sys-state-bound-whole-time-span}
    |x(t^{*}_{m})| \leq \frac{\|T_{\lambda}^{-1}\|
      \exp{(-(1-\sigma)(2\lambda-1-2\|N\|)t^{*}_{m}/2)}}
    {\sqrt{\lambda_{\min}((T_{\lambda}^{-1})^{T} (T_{\lambda}^{-1}))}}
    |x_{0}|\,,
\end{equation}
  which is computed for $t= t^{*}_{m}$.
     
     In addition, in an analogous way, this time considering
     $t\in[t^{*}_{m}, t^{*}_{m+1}]$, 
     we can obtain the following result:
  \begin{align}
    \label{eqn-thm-orig-sys-state-bound-last-segment}
    |x(t^{*}_{m+1})| \leq &\frac{\|T_{\lambda}^{-1}\|
      \exp{(-(1-\sigma)(2\lambda-1-2\|N\|)\tau_{\lambda}/2)}}
    {\sqrt{\lambda_{\min}((T_{\lambda}^{-1})^{T} (T_{\lambda}^{-1}))}}
    \times \nonumber \\ & |x(t^{*}_{m})|\,,
  \end{align}
where we note that $\tau_{\lambda}$ appears, as by our triggering law,
$t^{*}_{m+1} - t^{*}_{m} = \tau_{\lambda}$.  

   Let us consider the
transformed system~\eqref{eqn-jordan-triggering-trans-sys}, once
more. We consider the time-interval $[t^{*}_{m+1}, T]$, then
$e_{\lambda}(t)= x_{\lambda}(t^{*}_{m}) - x_{\lambda}(t)$ and so an
equivalent form of~\eqref{eqn-jordan-triggering-trans-sys} can be
written as:
\begin{equation*}
  \dot{x}_{\lambda} = T_{\lambda}^{-1} A T_{\lambda} x_{\lambda} +
  T_{\lambda}^{-1} BK_{\lambda} T_{\lambda} x_{\lambda}(t^{*}_{m}),
  \quad \forall t\in [t^{*}_{m+1}, T]\,.
\end{equation*}
Solving this dynamics for the initial condition
$x_{\lambda}(t^{*}_{m+1})$, we obtain the following:
\begin{align}
  \label{eqn-thm-soln-trans-dynamics}
  x&_{\lambda}(t) = \exp{( (t-t^{*}_{m+1}) T_{\lambda}^{-1} A
    T_{\lambda} )} x_{\lambda}(t^{*}_{m+1}) +
  \nonumber\\ &\int^{t}_{t^{*}_{m+1}} {\exp{( (t-s) T_{\lambda}^{-1} A
      T_{\lambda} )} T_{\lambda}^{-1} B K_{\lambda} T_{\lambda}
    x_{\lambda}(t^{*}_{m}) \, \text{d}s}\,,
\end{align}

  which holds for $t\in [t^{*}_{m+1}, T]$. 
  In order to further simplify the latter equation, we use the fact that for a given matrix $A\in\real^{n\times n}$, 
  and invertible matrix
  $T\in\real^{n\times n}$, it holds: $\exp{(T^{-1}AT) } = T^{-1}
  \exp{(A)} T$.

Hence, 
Equation~\eqref{eqn-thm-soln-trans-dynamics} is simplified as follows:
\begin{align*}
  x_{\lambda}(t) =& T_{\lambda}^{-1} \exp{( (t-t^{*}_{m+1}) A )}
  T_{\lambda} x_{\lambda}(t^{*}_{m+1}) + \\ &\int^{t}_{t^{*}_{m+1}}
  {T_{\lambda}^{-1} \exp{( (t-s) A )} B K_{\lambda} T_{\lambda}
    x_{\lambda}(t^{*}_{m}) \, \text{d}s}\,,
\end{align*}
which then results in the following equation:
\begin{align*}
  T_{\lambda} x_{\lambda}(t) =& \exp{( (t-t^{*}_{m+1}) A )}
  T_{\lambda} x_{\lambda}(t^{*}_{m+1}) + \\ &\int^{t}_{t^{*}_{m+1}}
  {\exp{( (t-s) A )} B K_{\lambda} T_{\lambda} x_{\lambda}(t^{*}_{m})
    \, \text{d}s}\,,
\end{align*}
and using $x = T_{\lambda}x_{\lambda}$ to transform it back into the
original dynamics, yields:
\begin{align}
  \label{eqn-thm-soln-orig-dynamics}
  x(t) =& \exp{( (t-t^{*}_{m+1}) A )} x(t^{*}_{m+1}) +
  \nonumber\\ &\int^{t}_{t^{*}_{m+1}} {\exp{( (t-s) A )} B K_{\lambda}
    x(t^{*}_{m}) \, \text{d}s}\,.
\end{align}
We upper-bound~\eqref{eqn-thm-soln-orig-dynamics},
using~\eqref{eqn-bound-exp-matrix}, which results in the following:
\begin{align*}
  |x(t)| \leq& |x(t^{*}_{m+1})| \exp{((t-t^{*}_{m+1})\mu_A)} +
  \\ &|x(t^{*}_{m})| \|BK_{\lambda}\| \int^{t}_{t^{*}_{m+1}}
     {\exp{((t-s)\mu_A)}\, \text{d}s}\,.
\end{align*}
We evaluate the latter equation at $t=T$, and then solve the integral
to obtain:
\begin{align}
  \label{eqn-thm-ineq-orig-dynamics-1}
  |x(T)| \leq& |x(t^{*}_{m+1})| \exp{((T-t^{*}_{m+1})\mu_A)} +
  \nonumber\\ &|x(t^{*}_{m})| \frac{\|BK_{\lambda}\|}{\mu_A}
  (\exp{((T-t^{*}_{m+1})\mu_A)-1)}\,.
\end{align}
Since $T - \supscr{\subscr{T}{off}}{cr} =
\supscr{\subscr{T}{on}}{cr}$, and $\supscr{\subscr{T}{off}}{cr} <
t^{*}_{m+1}$, we have that:
\begin{equation*}
  T - t^{*}_{m+1} < \supscr{\subscr{T}{on}}{cr}\,.
\end{equation*}
Thus, we can rewrite~\eqref{eqn-thm-ineq-orig-dynamics-1} as:
\begin{align}
  \label{eqn-thm-ineq-orig-dynamics-2}
  |x(T)| \leq & |x(t^{*}_{m+1})| \exp{(\supscr{\subscr{T}{on}}{cr}
    \mu_A)} + \nonumber\\ & |x(t^{*}_{m})|
  \frac{\|BK_{\lambda}\|}{\mu_A} (\exp{(\supscr{\subscr{T}{on}}{cr}
    \mu_A)-1)}\,.
\end{align}
Applying now
Equation~\eqref{eqn-thm-orig-sys-state-bound-last-segment}
on~\eqref{eqn-thm-ineq-orig-dynamics-2}, we get:
\begin{align}
  \label{eqn-thm-ineq-orig-dynamics-3}
  & \frac{|x(T)|}{|x(t^{*}_{m})|} \leq \bigg(
  \frac{\|BK_{\lambda}\|}{\mu_{A}} (\exp{(\supscr{\subscr{T}{on}}{cr}
    \mu_A)} -1) + \nonumber \\ &\frac{ \exp{(-(1-\sigma)(2\lambda -1
      -2\|N\|) \tau_{\lambda}/2)}}
       {\|T_{\lambda}^{-1}\|^{-1}\sqrt{\lambda_{\min}((T_{\lambda}^{-1})^{T}
           (T_{\lambda}^{-1}))}} \exp{(\supscr{\subscr{T}{on}}{cr}
         \mu_A)} \bigg)\,.
\end{align}
Now, combining~\eqref{eqn-thm-orig-sys-state-bound-whole-time-span}
and~\eqref{eqn-thm-ineq-orig-dynamics-3}, we obtain:
\begin{align}
  \label{eqn-thm-ineq-orig-dynamics-4}
&\frac{|x(T)|}{|x_{0}|} \leq \left( \frac{ \exp{(-(1-\sigma)(2\lambda
      -1 -2\|N\|)t^{*}_{m}/2)}} {\|T_{\lambda}^{-1}\|^{-1}
    \sqrt{\lambda_{\min}((T_{\lambda}^{-1})^{T} (T_{\lambda}^{-1}))}}
  \right) \times \nonumber\\ &\bigg( \frac{\|BK_{\lambda}\|}{\mu_{A}}
  (\exp{(\supscr{\subscr{T}{on}}{cr} \mu_A)} -1) + \nonumber
  \\ &\frac{ \exp{(-(1-\sigma)(2\lambda -1 -2\|N\|) \tau_{\lambda})}}
     {\|T_{\lambda}^{-1}\|^{-1}
       \sqrt{\lambda_{\min}((T_{\lambda}^{-1})^{T}
         (T_{\lambda}^{-1}))}} \exp{(\supscr{\subscr{T}{on}}{cr}
       \mu_A)} \bigg)\,.
\end{align}
To obtain the main equation, we shall
use~\eqref{eqn-floor-tm-tau-lambda} to further
bound~\eqref{eqn-thm-ineq-orig-dynamics-4}, which then results in:
\begin{align}
  \label{eqn-thm-ineq-orig-dynamics-main}
  &\frac{|x(T)|}{|x_{0}|} \leq \left( \frac{
    \exp{(-(1-\sigma)(2\lambda -1
      -2\|N\|)\supscr{\subscr{T}{off}}{cr}/4)}} {
    \|T_{\lambda}^{-1}\|^{-1}
    \sqrt{\lambda_{\min}((T_{\lambda}^{-1})^{T} (T_{\lambda}^{-1}))}}
  \right) \times \nonumber\\ &\bigg( \frac{\|BK_{\lambda}\|}{\mu_{A}}
  (\exp{(\supscr{\subscr{T}{on}}{cr} \mu_A)} -1) + \nonumber
  \\ &\frac{\exp{(-(1-\sigma)(2\lambda -1 -2\|N\|) \tau_{\lambda})}}
     {\|T_{\lambda}^{-1}\|^{-1}
       \sqrt{\lambda_{\min}((T_{\lambda}^{-1})^{T}
         (T_{\lambda}^{-1}))}} \exp{(\supscr{\subscr{T}{on}}{cr}
       \mu_A)} \bigg) \triangleq \nonumber\\ & C(\lambda)\,.
\end{align}

  We present now the following result on the
coefficient $C(\lambda)$, introduced in the latter inequality.

\begin{claim}
\label{claim-lim-c-lambda}
In~\eqref{eqn-thm-ineq-orig-dynamics-main} following holds:
\begin{equation}
\label{eqn-claim-thm-main-assertion}
\lim_{\lambda \rightarrow \infty} C(\lambda) = 0\,.
\end{equation}
\end{claim}
\emph{Proof of Claim~\ref{claim-lim-c-lambda}:} In order to complete
the proof, we shall break $C(\lambda)$-expression as follows:
\begin{equation*}
  C(\lambda) = C_{1}(\lambda) (C_{2}(\lambda) + C_{3}(\lambda))\,,
\end{equation*}
where:
\begin{equation*}
  C_{1}(\lambda) = \left( \frac{ \exp{(-(1-\sigma)(2\lambda -1
      -2\|N\|)\supscr{\subscr{T}{off}}{cr}/4)}}
  {\|T_{\lambda}^{-1}\|^{-1}
    \sqrt{\lambda_{\min}((T_{\lambda}^{-1})^{T} (T_{\lambda}^{-1}))}}
  \right)\,,
\end{equation*}
\begin{equation*}
  C_{2}(\lambda) = ( \frac{\|BK_{\lambda}\|}{\mu_{A}}
  (\exp{(\supscr{\subscr{T}{on}}{cr} \mu_A)} -1)\,,
\end{equation*}
and
\begin{equation*}
  C_{3}(\lambda) = \frac{ \exp{(-(1-\sigma)(2\lambda -1 -2\|N\|)
      \tau_{\lambda})}} {\|T_{\lambda}^{-1}\|^{-1}
    \sqrt{\lambda_{\min}((T_{\lambda}^{-1})^{T} (T_{\lambda}^{-1}))}}
  \exp{(\supscr{\subscr{T}{on}}{cr} \mu_A)}\,.
\end{equation*}
Then, we shall show that $\lim_{\lambda \rightarrow \infty}
{C_{1}(\lambda)C_{2}(\lambda)} = 0$, and $\lim_{\lambda \rightarrow
  \infty} {C_{1}(\lambda)C_{3}(\lambda)} = 0$.

According to~\eqref{eqn-jordan-decomp-closed-loop-matrix}, and
recalling $J_{\lambda} = -\lambda I + N$, we get $BK_{\lambda} = -A +
T_{\lambda}^{-1} (-\lambda I + N) T_{\lambda}$, which then results in
$BK_{\lambda} = -A - \lambda I + T_{\lambda}^{-1} N
T_{\lambda}$. Therefore, from this matrix equality and applying the 2-norm
operator on both sides, we get $\|BK_{\lambda}\| = \|-(A+\lambda I) +
T_{\lambda}^{-1} N T_{\lambda}\|$, applying triangular-inequality on
the right-hand side, we get $\|BK_{\lambda}\| \leq \| -(A+\lambda I)\|
+ \|T_{\lambda}^{-1} N T_{\lambda}^{-1}\|$, which can be further
upper-bounded as follows:
\begin{equation*}
  \|BK_{\lambda}\| \leq \|A\| + |\lambda| + \|T_{\lambda}^{-1}\| \|N\|
  \|T_{\lambda}\|\,.
\end{equation*}
We shall employ this latter inequality, in order to obtain a new
upper-bound for $C_{1}(\lambda) C_{2}(\lambda)$:
\begin{align}
  \label{eqn-claim-thm-new-upp-bound-c1-c2}
  0 \leq & C_{1}(\lambda)C_{2}(\lambda) \leq C_{1}(\lambda) \times
  \nonumber\\ &\left(\frac{\|A\| +|\lambda| +\|T_{\lambda}^{-1}\|
    \|N\| \|T_{\lambda}} {\mu_A} (\exp{(\supscr{\subscr{T}{on}}{cr}
    \mu_A)} -1) \right)\,.
\end{align}
Now, in order to show that $\lim_{\lambda \rightarrow \infty}
C_{1}(\lambda) C_{2}(\lambda) = 0$, we put together these two facts:
(i) $C_{1}(\lambda)$ decays exponentially, as $\lambda \rightarrow
\infty$, since $\lambda > \|N\| +1/2$, and $\sigma\in(0,1)$, (ii)
based on what we explained in
Section~\ref{sec-jordan-decomp-triggering}, the matrices
$T_{\lambda}$, and $T_{\lambda}^{-1}$ depend on $\lambda$ in a
rational way, so the values $\|T_{\lambda}\|$, and
$\|T_{\lambda}^{-1}\|$ depend on $\lambda$ in a semi-algebraic
form~\cite{SB-RP-MFR:06}, thus the dependency of the coefficient of
$C_{1}(\lambda)$ appearing in upper-bound of $C_{1}(\lambda)
C_{2}(\lambda)$ in~\eqref{eqn-claim-thm-new-upp-bound-c1-c2} on
$\lambda$ is of a semi-algebraic form, which is dominated by an
exponential dependency. Therefore, we can conclude that the
upper-bound of $C_{1}(\lambda) C_{2}(\lambda)$
in~\eqref{eqn-claim-thm-new-upp-bound-c1-c2}, tends to zero as
$\lambda \rightarrow \infty$. Henceforth, as the lower-bound of
$C_{1}(\lambda) C_{2}(\lambda)$ is zero, then we conclude that:
\begin{equation}
  \label{eqn-claim-thm-c1-c2-final}
  \lim_{\lambda \rightarrow \infty} {C_{1}(\lambda) C_{2}(\lambda)} =
  0\,.
\end{equation}
In the following lines, we show that $\lim_{\lambda \rightarrow
  \infty} {C_{1}(\lambda)C_{3}(\lambda)} = 0$. First, we note that $0
\leq \tau_{\lambda} \leq T$, and $-(1-\sigma) (2\lambda -1 -2\|N\|)
\leq 0$, therefore, we get the following bounds for $C_{2}(\lambda)$:
\begin{align*}
  & \frac{\|T_{\lambda}^{-1}\| \exp{(-(1-\sigma)(2\lambda -1
      -2\|N\|)T/2)}}{\sqrt{\lambda_{\min}((T_{\lambda}^{-1})^{T}(T_{\lambda}^{-1}))}}
  \exp{(\supscr{\subscr{T}{on}}{cr} \mu_A)} \leq \\ & C_{3}(\lambda)
  \leq
  \frac{\|T_{\lambda}^{-1}\|}{\sqrt{\lambda_{\min}((T_{\lambda}^{-1})^{T}(T_{\lambda}^{-1}))}}
  \exp{(\supscr{\subscr{T}{on}}{cr} \mu_A)}\,.
\end{align*}
Also, $C_{1}(\lambda) > 0, \forall \lambda$, hence we can multiply the
latter inequality by $C_{1}(\lambda)$:
\begin{align}
  \label{eqn-claim-thm-main-bounds-c1-c3}
  & \frac{\|T_{\lambda}^{-1}\| \exp{(-(1-\sigma)(2\lambda -1
      -2\|N\|)T/2)}}{\sqrt{\lambda_{\min}((T_{\lambda}^{-1})^{T}(T_{\lambda}^{-1}))}
    \exp{(-\supscr{\subscr{T}{on}}{cr} \mu_A)}} C_{1}(\lambda) \leq
  \nonumber\\ & C_{1}(\lambda) C_{3}(\lambda) \leq
  \frac{\|T_{\lambda}^{-1}\| \exp{(\supscr{\subscr{T}{on}}{cr}
      \mu_A)}}{\sqrt{\lambda_{\min}((T_{\lambda}^{-1})^{T}(T_{\lambda}^{-1}))}}
  C_{1}(\lambda)\,.
\end{align}
Then, we study the limit of upper- and lower-bounds
of~\eqref{eqn-claim-thm-main-bounds-c1-c3}. Let us plug
$C_{1}(\lambda)$-expression in the lower-bound
of~\eqref{eqn-claim-thm-main-bounds-c1-c3}, we obtain: 

\begin{align*}
  & \text{LB}_{C_{1}C_{3}}(\lambda) \triangleq \\ & \frac{
    \|T_{\lambda}^{-1}\|^{2} \exp{(-(1-\sigma)(2\lambda -1
      -2\|N\|)(T/2+\supscr{\subscr{T}{off}}{cr}/4))} } {\lambda_{\min}
    ((T_{\lambda}^{-1})^{T} (T_{\lambda}^{-1}) )}\,.
\end{align*}
In order to show that $\lim_{\lambda \rightarrow \infty}
{\text{LB}_{C_{1}C_{3}}(\lambda)} = 0$, we recall three facts: (i) Since
$\sigma\in(0,1)$, $\lambda > \|N\| + 1/2$, then there is an
exponentially decaying term in $LB_{C_{1}C_{3}}(\lambda)$. (ii)
Recalling our discussion in
Section~\ref{sec-jordan-decomp-triggering}, $T_{\lambda}^{-1}$ depends
on $\lambda$ in a semi-algebraic way, which is dominated by
exponential decay. (iii) Once again, referring to
Section~\ref{sec-jordan-decomp-triggering}, the matrix
$(T_{\lambda}^{-1})^{T} (T_{\lambda}^{-1})$ depends on $\lambda$ in a
rational way. Hence, its characteristic polynomial depends on this
parameter in a rational way, moreover, we note that this polynomial is
of degree $4$ or less, by assumption, and due to Galois theory, the
dependency of the roots of this polynomial---including $\lambda_{\min}
((T_{\lambda}^{-1})^{T} (T_{\lambda}^{-1}) )$---
  on $\lambda$ is of semi-algebraic form, which is
dominated by exponential decay.

Having discussed the behavior at infinity of the lower-bound
of~\eqref{eqn-claim-thm-main-bounds-c1-c3}, we study the behavior of
its upper-bound at infinity. Let us plug the $C_{1}(\lambda)$
expression in the upper-bound
of~\eqref{eqn-claim-thm-main-bounds-c1-c3}. We then
obtain:
\begin{align*}
  & \text{UB}_{C_{1}C_{3}}(\lambda) \triangleq \\ & \frac{
    \|T_{\lambda}^{-1}\|^{2} \exp{(-(1-\sigma)(2\lambda -1
      -2\|N\|)(\supscr{\subscr{T}{off}}{cr}/4))} } {\lambda_{\min}
    ((T_{\lambda}^{-1})^{T} (T_{\lambda}^{-1}) )
    \exp{(-\supscr{\subscr{T}{on}}{cr} \mu_A)} }\,.
\end{align*}
Similar to $\text{LB}_{C_{1}C_{3}}(\lambda)$, it is easy to conclude that
$\lim_{\lambda \rightarrow \infty} {\text{UB}_{C_{1}C_{3}}(\lambda)} = 0$.
In the previous paragraphs, we have shown that the limit behavior as
$\lambda$ grows of the lower- and upper-bound
of~\eqref{eqn-claim-thm-main-bounds-c1-c3} is $0$. Hence, we infer
that:
\begin{equation}
  \label{eqn-claim-thm-c1-c3-final}
  \lim_{\lambda \rightarrow \infty} {C_{1}(\lambda) C_{3}(\lambda)} =
  0\,.
\end{equation}
Finally, having shown that the
Equations~\eqref{eqn-claim-thm-c1-c2-final},
and~\eqref{eqn-claim-thm-c1-c3-final}, hold, we have
proven~\eqref{eqn-claim-thm-main-assertion}. This completes the proof
of this claim.  \oprocend

At this stage, we have proven that in $|x(T)| \leq |x_{0}|
C(\lambda)$, it holds that $\lim_{\lambda \rightarrow \infty}
{C(\lambda)} = 0$. The main consequence of this conclusion is that,
based on the definition of limit, the following holds:
\begin{equation}
  \label{eqn-thm-lim-defn-c-lambda}
  \text{given}\, \epsilon >0, \exists \lambda^{*},\, \text{such
    that}\, \forall \lambda \geq \lambda^{*} \Rightarrow
  |C(\lambda)|<\epsilon\,,
\end{equation}
so, in other words, we can arbitrarily tune the
\textit{decaying-rate} of the states via $\lambda$ (and its effect on
$C(\lambda)$).  It is, nonetheless, worth mentioning that in order to
paraphrase the asymptotic stability, the parameter $\epsilon$ has to
be chosen such that $\epsilon < 1$, so that according
to~\eqref{eqn-thm-lim-defn-c-lambda}, $C(\lambda)<1$, and so $V(T) <
V(0)$, which ensures the asymptotic stability as demonstrated in Claim
4.3 in~\cite{HSF-SM:12-cdc}.
\end{proof}

\begin{remark}
\label{remark-main-thm-diag-dom}
As stated in the statement of this theorem, this is valid for systems
of order $4$, or less. An open question is the investigation the
classes of systems of higher order for which the result still holds.
In particular, we have observed the validity of the result for a
system of order $5$, and have presented the results in the
Section~\ref{sec-simulations}.
\end{remark}

In the following remark, we shall discuss that the results obtained so far are valid for an alternative
problem formulation:

\begin{remark}
\label{remark-main-thm-alt-prob-form}
We recall from the Section~\ref{sec-problem-formulation} that in the current problem formulation,
the jammer is corrupting the control channel signal, whereas the
observation channel is safe. Indeed, we would like to point out that the results presented
in this paper---including Theorem~\ref{main-thm-arbit-jammer}---
are still valid under the other problem formulation, i.e., the observation channel is also 
corrupted by the \emph{same} jammer. This is the case, because the measurement data is required at the same 
time-instant as the control is transmitted, which is successfully available, since the observation channel
is jammed by the same jammer. 
\end{remark}

\begin{remark}
It is worth noting that our discussion in the Remark~\ref{remark-main-thm-alt-prob-form} is no
longer valid for our previous results presented in~\cite{HSF-SM:12-cdc}, since therein the \emph{continuous} 
measurement of the states was necessary.
\end{remark}

\begin{remark}
At last, let us call (i) ``frequency of communication'' characterized by $\tau_{\lambda}$, and (ii) ``actuation effort''
characterized by $\lambda$ and $K_{\lambda}$, the two \textit{resources} that the operator posesses in order to counteract the
jammer. We would like to emphasize that in the method we are proposing, it is not feasible to decouple the utility of these two resources.
In better words, the coupling between the utility of these resources yields the main results presented thus far. We, nonetheless, do not deny that 
an alternative approach may exist which encompasses this decoupling idea and yields the same assertion as in Theorem~\ref{main-thm-arbit-jammer}.
\end{remark}



\section{Simulations}
\label{sec-simulations}
Having established the theoretical results of previous sections, here
we demonstrate the functionality of these results on some
representative academic examples.

\subsection{Example 1: $3 \times 3$ system}
\label{subsec-sim-33-sys}
We consider the following system:
\begin{align*}
  \dot{x} &= \left[ \begin{array}{ccc} 0 & 1 & 0 \\ 0 & 0 & 1 \\ -3 &
      -2 & 3 \end{array} \right] x + \left[ \begin{array}{c} 0 \\ 0
      \\ 1 \end{array} \right] u\,, \\ u &=
  \left[-\left(\begin{array}{c} 3 \\ 3 \end{array}
    \right)\lambda^{3}+3, -\left(\begin{array}{c} 3 \\ 2 \end{array}
    \right)\lambda^{2}+2, -3\lambda-3\right]x\,.
\end{align*}
The state matrix of the closed-loop system is of the following form:
\begin{equation*}
  A+BK_{\lambda} = \left[ \begin{array}{ccc} 0 & 1 & 0 \\ 0 & 0 & 1
      \\ -\lambda^{3} & -3 \lambda^{2} & -3\lambda \end{array}
    \right]\,,
\end{equation*}
where its only eigenvalue is $-\lambda$, which has algebraic and
geometric multiplicity of $3$, and $1$, respectively, referring to
Proposition~\ref{prop-lambda-eigenvalue-prop-prel}. The only linearly
independent eigenvector is given by solving the equation
$(A+BK_{\lambda}+\lambda I)v_{1} = 0$ for $v_{1}$, where we obtain:
\begin{equation*}
  v_{1} =\left(\begin{array}{c} 1 \\ -\lambda
    \\ \lambda^{2} \end{array} \right)\,.
\end{equation*} 
In order to build the matrix $T_{\lambda}$ (and $T_{\lambda}^{-1}$),
we need to generate two other generalized eigenvectors, namely
$v_{2}$, and $v_{3}$. They are, respectively, the solutions to
$(A+BK_{\lambda} + \lambda I)v_{2} =v_{1}$, and $(A+BK_{\lambda} +
\lambda I)v_{3} =v_{2}$ equations. After some algebraic manipulations,
we get the following result:
\begin{equation*}
  v_{2} = \left( \begin{array}{c} \frac{2}{\lambda} \\ -1
    \\ 0 \end{array} \right)\,, \, v_{3} = \left( \begin{array}{c}
    \frac{3}{\lambda^{2}} \\ -\frac{1}{\lambda} \\ 0 \end{array}
  \right)\,.
\end{equation*}
Hence, matrix $T_{\lambda}$ is obtained as: $T_{\lambda} = [v_{1},
  v_{2}, v_{3}]$. Moreover, given the multiplicities of $-\lambda$,
the matrix $N$ is as follows:
\begin{equation*}
  N = \left[ \begin{array}{ccc} 0 & 1 & 0 \\ 0 & 0 & 1 \\ 0 & 0 &
      0 \end{array} \right]\,.
\end{equation*}

In order to perform the simulation, we have to ``tune'' some
parameters related to the jammer and the triggering policy. We have
chosen $\sigma = 0.1$, jammer action-period $T =1 \sec$,
$\supscr{\subscr{T}{on, 1}}{cr} = 0.9 T$, $\supscr{\subscr{T}{off,
    1}}{cr} = 0.1 T$, and $\supscr{\subscr{T}{on, 2}}{cr} = 0.5 T$,
$\supscr{\subscr{T}{off, 2}}{cr} = 0.5 T$. We note that the first
jammer is more malicious than the second one.

We use the procedure explained in Algorithm~\ref{c-lambda-algorithm}, 
to run the simulation. The result is presented in
Figure~\ref{third-order-sys-tau-lambda}.
 In order to get a deeper insight into the ODE introduced in step~\ref{step-alg-c-lambda-ode} of this
algorithm, refer to Corollary~IV.1 in~\cite{PT:07}. 

\begin{algorithm}[tb] \small
\caption{$C(\lambda)$-Seeking} \label{c-lambda-algorithm}

\begin{algorithmic}

\REQUIRE Matrices: $A$, $B$, and $N$, Sequence:
$\{\lambda_{k}\}_{k=1}^{N^{\prime}}$, Parameters: $\sigma$,
$\supscr{\subscr{T}{off}}{cr}$, and $T$.

\end{algorithmic}

\begin{algorithmic}[1]

\STATE Given controllable pair $(A,B)$, compute the proper similarity
transformation matrix, and find $(A_{c}, B_{c})$---which are in
controllable canonical form,

\FOR{$k = 1$ to $N^{\prime}$}

\STATE \label{step-alg-c-lambda-ode} {Numerically solve the following ODE, with $\phi(0) = 0$:
	\begin{align*}
	\dot{\phi} = &\|A+BK_{\lambda_{k}}\| + (\|A+BK_{\lambda_{k}}\|
        + \|BK_{\lambda_{k}}\|) \phi +\\ & \|BK_{\lambda_{k}}\|
        \phi^{2}\,,
	\end{align*}}
\STATE{Find $\tau_{\lambda_{k}}$, such that $\phi(\tau_{\lambda_{k}})
  = \sigma$,}

\STATE{Compute $C(\lambda_{k})$, as stated in
  equation~\eqref{eqn-thm-ineq-orig-dynamics-main}}.

\ENDFOR

\ENSURE{Sequences $\{C(\lambda_{k})\}_{k=1}^{N^{\prime}}$ and
  $\{\tau_{\lambda_{k}}\}_{k=1}^{N^{\prime}}$ }.

\end{algorithmic}
\end{algorithm}

\begin{figure}
  \centering \psfrag{lambda}[tc][cc]{$\lambda$}
  \psfrag{c-lambda}[rc][cc]{$C(\lambda)$}
  \psfrag{coeff-90-perc-third}[rc][cc]{$\text{Coefficient}\,
    C(\lambda), \text{90\% active jammer}$}
  \psfrag{coeff-50-perc-third}[rc][cc]{$\text{Coefficient}\,
    C(\lambda), \text{50\% active jammer}$} \psfrag{0}[tr][cc]{$0$}
  \psfrag{500}[tc][cc]{$500$} \psfrag{1000}[tc][cc]{$1000$}
  \psfrag{1500}[tc][cc]{$1500$} \psfrag{2000}[tc][cc]{$2000$}
  \psfrag{1}[rc][cc]{$1$} \psfrag{2}[rc][cc]{$2$}
  \psfrag{3}[rc][cc]{$3$} \includegraphics[width=3.5in,
    height=3.0in]{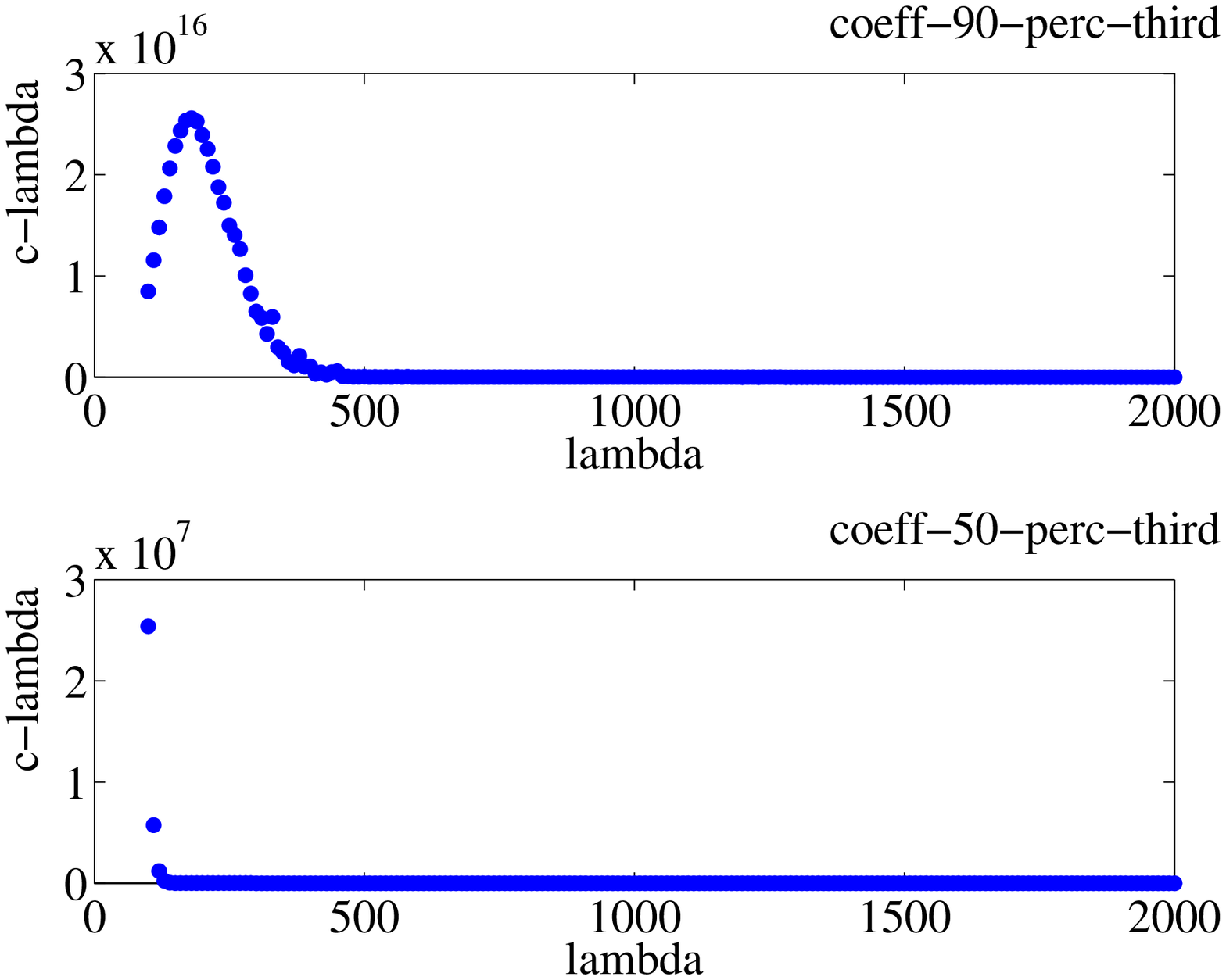}
  \caption{Third-order system: comparing $90\%$ and $50\%$ active jammers}
  \label{third-order-sys-c-lambda}
\end{figure}

Referring to Figure~\ref{third-order-sys-c-lambda}, we can list the
following remarks:
\begin{remark}
  We acknowledge these facts: (i) the $90\%$ active jammer is more
  malicious than the $50\%$ active jammer, and (ii) to maintain the
  asymptotic stability, we should at least guarantee $C(\lambda) <
  1$. Hence, let us define:
\begin{equation*}
  \bar{\lambda} = \min_{1 \leq k \leq N^{\prime}} {\{\lambda_{k} | \forall
    \lambda \geq \lambda_{k}, C(\lambda) < 1\}}\,.
\end{equation*}
Then, we obtain $\bar{\lambda}_{90\%} = 1360$, and
$\bar{\lambda}_{50\%} = 210$. Accordingly, we can induce that in order
to guarantee the asymptotic stability, \textit{larger} poles (in the
absolute sense) are required in the case of $90\%$ active jammer; this
can be interpreted as \text{larger} control effort.
\end{remark}

\begin{remark}
  According to the intricacy of the $C(\lambda)$ equation; finding
  $\lambda$ analytically, for a given value of $C(\lambda)$ is not
  feasible. As an alternative way, one can use our proposed procedure,
  in order to \textit{numerically} achieve this goal. So, e.g., having
  obtained the sequence $\{C(\lambda_{k})\}_{k=1}^{N^{\prime}}$, for a given
  sequence $\{\lambda_{k}\}_{k=1}^{N^{\prime}}$, one can then obtain a
  polynomial or spline approximation for $C(\lambda)$.
\end{remark}

For this system, we have also conducted a study on the evolution of
the parameter $\tau_{\lambda}$. This time, we picked the sequence
$\{\lambda_{k} = 0.01k\}_{k=1}^{1000}$, and for each $\lambda_{k}$, we
run the procedure explained in Algorithm~\ref{c-lambda-algorithm}. The
result is presented in Figure~\ref{third-order-sys-tau-lambda}.

\begin{figure}
  \centering \psfrag{lambda}[tc][cc]{$\lambda$}
  \psfrag{tau-lambda}[rc][cc]{$\tau_{\lambda}$}
  \psfrag{tau-lambda-evo}[rc][cc]{$\text{Evolution of the parameter}\,
    \tau_{\lambda}$} \psfrag{0}[tr][cc]{$0$} \psfrag{2}[tc][cc]{$2$}
  \psfrag{4}[tc][cc]{$4$} \psfrag{6}[tc][cc]{$6$}
  \psfrag{8}[tc][cc]{$8$} \psfrag{10}[tc][cc]{$10$}
  \psfrag{0.01}[rc][cc]{$0.01$} \psfrag{0.02}[rc][cc]{$0.02$}
  \psfrag{0.03}[rc][cc]{$0.03$} \psfrag{0.04}[rc][cc]{$0.04$}
  \psfrag{0.05}[rc][cc]{$0.05$} \psfrag{0.06}[rc][cc]{$0.06$}
  \psfrag{0.07}[rc][cc]{$0.07$} \psfrag{0.08}[rc][cc]{$0.08$}
  \includegraphics[width=3.5in,
    height=3.0in]{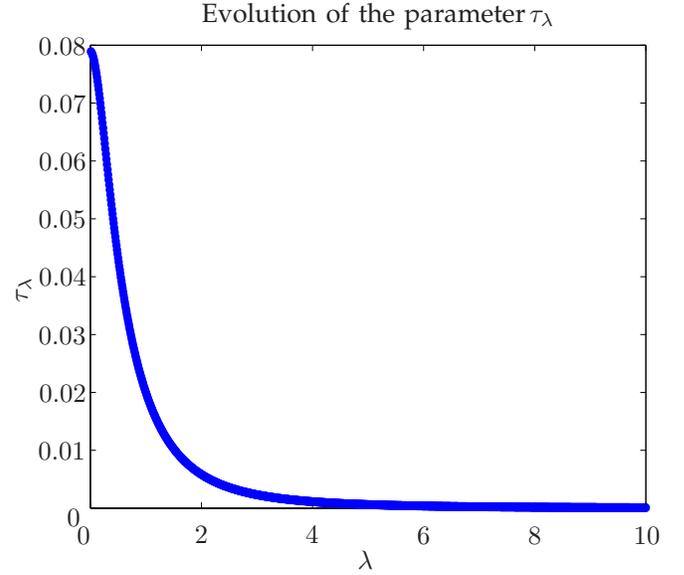}
  \caption{Third-order system: evolution of $\tau_{\lambda}$}
  \label{third-order-sys-tau-lambda}
\end{figure}
The Figure~\ref{third-order-sys-tau-lambda} confirms our result in
  Theorem~\ref{thm-jordan-trig-seq-tau-lambda} on the evolution of
  $\tau_{\lambda}$.

\subsection{Example 2: $5 \times 5$ system}
\label{subsec-sim-55-sys}
Our main result in this paper, Theorem~\ref{main-thm-arbit-jammer}, is
stated for the systems of order $4$, or less. Nevertheless, this is
based on the general condition provided by Galois Theory, leaving open
the question of whether it holds for subclasses of systems of higher
order. We have conducted a simulation study on a $5 \times 5$ system
in canonical form, and as it comes later, our result is yet valid.

We consider the following system:
\begin{align*}
  \dot{x} =& \left[ \begin{array}{ccccc} 0 & 1 & 0 & 0 & 0 \\ 0 & 0 &
      1 & 0 & 0 \\ 0 & 0 & 0 & 1 & 0 \\ 0 & 0 & 0 & 0 & 1 \\ -7 & 10 &
      -3 & 4 & -6 \end{array} \right] x + \left[ \begin{array}{c} 0
      \\ 0 \\ 0 \\ 0 \\ 1 \end{array} \right] u\,, \\ u =&
      [-\lambda^{5}+7, -5\lambda^{4}-10, -10\lambda^{3}+3,
        \\ &-10\lambda^{2}-4, -5\lambda+6 ]x\,.
\end{align*}
The state-matrix of the closed-loop system is of the following form:
\begin{equation*}
  A+BK_{\lambda} = \left[ \begin{array}{ccccc} 0 & 1 & 0 & 0 & 0 \\ 0
      & 0 & 1 & 0 & 0 \\ 0 & 0 & 0 & 1 & 0 \\ 0 & 0 & 0 & 0 & 1
      \\ -\lambda^{5} & -5 \lambda^{4} & -10\lambda^{3} &
      -10\lambda^{2} & -5\lambda \end{array} \right]\,,
\end{equation*}
where its only eigenvalue is $-\lambda$, which has algebraic and
geometric multiplicity of $5$, and $1$, respectively -referring to
Proposition~\ref{prop-lambda-eigenvalue-prop-prel}. The only linearly
independent eigenvector is given by solving the equation
$(A+BK_{\lambda}+\lambda I)v_{1} = 0$ for $v_{1}$, where we obtain:
\begin{equation*}
  v_{1} = \left(\begin{array}{c} 1 \\ -\lambda \\ \lambda^{2}
    \\ -\lambda^{3} \\ \lambda^{4} \end{array} \right)\,.
\end{equation*} 
In an analogous way as in Subsection~\ref{subsec-sim-33-sys}, we
compute the generalized eigenvectors:
\begin{align*}
  v_{2} =& \left( \begin{array}{c} \frac{4}{\lambda} \\ -3 \\ 2\lambda
    \\ -\lambda^{2} \\ 0 \end{array} \right)\,, \, v_{3} =
  \left( \begin{array}{c} \frac{10}{\lambda^{2}} \\ -\frac{6}{\lambda}
    \\ 3 \\ -\lambda \\ 0 \end{array} \right)\,, \, v_{4} =
  \left( \begin{array}{c} \frac{20}{\lambda^{3}}
    \\ -\frac{10}{\lambda^{2}} \\ \frac{4}{\lambda} \\ -1
    \\ 0 \end{array} \right)\,, \\ v_{5} =& \left( \begin{array}{c}
    \frac{35}{\lambda^{4}} \\ -\frac{15}{\lambda^{3}}
    \\ \frac{5}{\lambda^{2}} \\ -\frac{1}{\lambda} \\ 0 \end{array}
  \right)\,.
\end{align*}
Hence, matrix $T_{\lambda}$ is obtained as: $T_{\lambda} = [v_{1},
  v_{2}, v_{3}, v_{4}, v_{5}]$. Moreover, given the multiplicities of
$-\lambda$, the matrix $N$ is as follows:
\begin{equation*}
  N = \left[ \begin{array}{ccccc} 0 & 1 & 0 & 0 & 0 \\ 0 & 0 & 1 & 0 &
      0 \\ 0 & 0 & 0 & 0 & 1 \\ 0 & 0 & 0 & 0 & 0 \end{array}
    \right]\,.
\end{equation*}

We ``tune'' the parameters related to the jammer and the triggering
policy: $\sigma = 0.1$, jammer action-period $T =1 \sec$,
$\supscr{\subscr{T}{on, 1}}{cr} = 0.9 T$, $\supscr{\subscr{T}{off,
    1}}{cr} = 0.1 T$, and $\supscr{\subscr{T}{on, 2}}{cr} = 0.5 T$,
$\supscr{\subscr{T}{off, 2}}{cr} = 0.5 T$. We note that the first
jammer is more malicious than the second one. Then, we perform the
simulation running the procedure explained in
Algorithm~\ref{c-lambda-algorithm}, the result is shown in
Figure~\ref{fifth-order-sys-c-lambda}.

\begin{figure}
\centering \psfrag{lambda}[tc][cc]{$\lambda$}
\psfrag{c-lambda}[rc][cc]{$C(\lambda)$}
\psfrag{coeff-90-perc-third}[rc][cc]{$\text{Coefficient}\, C(\lambda),
  \text{90\% active jammer}$}
\psfrag{coeff-50-perc-third}[rc][cc]{$\text{Coefficient}\, C(\lambda),
  \text{50\% active jammer}$} \psfrag{0}[tr][cc]{$0$}
\psfrag{600}[tc][cc]{$600$} \psfrag{800}[tc][cc]{$800$}
\psfrag{1000}[tc][cc]{$1000$} \psfrag{1200}[tc][cc]{$1200$}
\psfrag{1400}[tc][cc]{$1400$} \psfrag{1600}[tc][cc]{$1600$}
\psfrag{1800}[tc][cc]{$1800$} \psfrag{5}[rc][cc]{$5$}
\psfrag{10}[rc][cc]{$10$} \psfrag{15}[rc][cc]{$15$}
\psfrag{1500}[rc][cc]{$1500$} \psfrag{3000}[rc][cc]{$3000$}
\psfrag{4500}[rc][cc]{$4500$} \includegraphics[width=3.5in,
  height=3.0in]{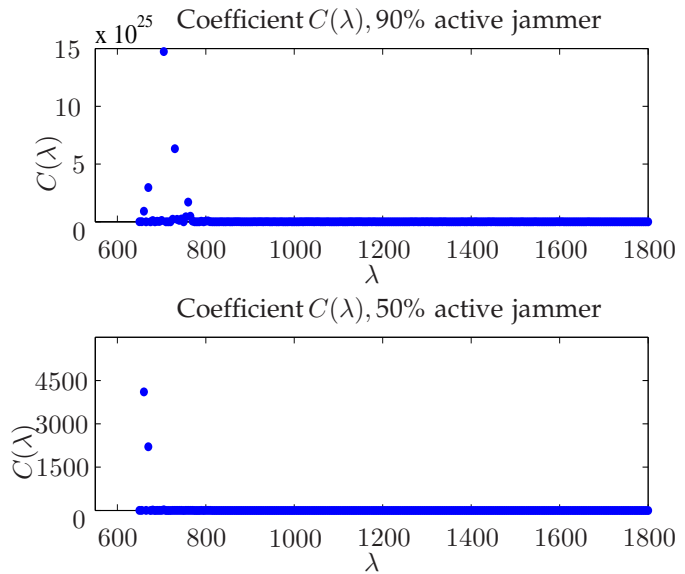}
\caption{Fifth-order system: comparing $90\%$ and $50\%$ active jammers}
\label{fifth-order-sys-c-lambda}
\end{figure}


Referring to Figure~\ref{fifth-order-sys-c-lambda}, we can list
similar remarks, as in
Subsection~\ref{subsec-sim-33-sys}. Furthermore, as promised before,
in both cases, it holds that $\lim_{\lambda \rightarrow \infty}
{C(\lambda)} = 0$, which could not be theoretically backed up.

\section{Conclusions and Future Work}
\label{sec-conclusions-future-work}
In this paper, we have considered single-input, of order $4$ or less,
continuous LTI systems, under periodic PWM DoS jamming attacks.  We
have proposed a resilient control design law, along with a triggering
time-sequence to update the controller. In the main result, we
demonstrated that this control design and triggering law is capable of
counteracting the effect of any jammer. In other words, we show that
the system is rendered asymptotically stable under our
contributions. The functionality of the theoretical studies has been
demonstrated in the simulation environment; where we have also shown
that the result holds for a system of order $5$, for which our
theoretical result cannot be stretched.

In this work, we have assumed that the jammer signal has been
previously detected and identified. We are currently studying how to
exploit signal processing techniques to partly identify the jammer, that is to
 identify the parameter $T$: the jammer's period
  . Moreover, as the title stands for, in this
paper we have focused on single-input linear systems. In future work,
we will plan to study nonlinear and multi-input classes of systems.


\bibliographystyle{plain}
\bibliography{/Users/hamed/bib/alias,/Users/hamed/bib/Main,/Users/hamed/bib/New,/Users/hamed/bib/FB,/Users/hamed/bib/SMD-add,/Users/hamed/bib/SM,/Users/hamed/bib/Main-sonia,/Users/hamed/bib/New-sonia}

\end{document}